\documentclass[12pt]{article}

\usepackage{amsfonts}
\usepackage{amsmath}
\usepackage{amssymb}
\usepackage{color}
\usepackage{amsthm}
\usepackage{hyperref}
\usepackage{makecell}

\setlength{\textheight}{23cm}
\setlength{\textwidth}{17.7cm}
\setlength{\topmargin}{-2cm}
\setlength{\oddsidemargin}{-0.3cm}
\setlength{\evensidemargin}{-0.3cm}

\newtheorem{theorem}{Theorem}
\newtheorem{corollary}[theorem]{Corollary}

\newtheorem{lemma}[theorem]{Lemma}
\newtheorem{proposition}[theorem]{Proposition}


\title{Biasing with an independent increment:\\ Gaussian approximations and proximity of Poisson mixtures} \author{Fraser Daly\footnote{Department of Actuarial Mathematics and Statistics, and the Maxwell Institute for Mathematical Sciences, Heriot--Watt University, Edinburgh EH14 4AS, UK.  E-mail: F.Daly@hw.ac.uk}} \date{\today}

\begin{document}

\maketitle

\noindent{\bf Abstract} 
By exploiting the well-known observation that size-biasing or zero-biasing an infinitely divisible random variable may be achieved by adding an independent increment, combined with tools from Stein's method for compound Poisson and Gaussian approximations, we establish three sets of approximation results: (a) bounds on the proximity of Poisson mixtures with infinitely divisible mixing distributions, (b) central limit theorems with explicit error bounds for sums of associated or negatively associated random variables which do not require boundedness of the underlying distributions, and (c) a Gaussian approximation theorem under a vanishing third moment condition. These exploit biasing by an independent increment directly, via an intermediate compound Poisson approximation, and through a convex ordering argument, respectively. Applications include a Dickman-type limit theorem, simple random sampling and urn models with overflow. 
\vspace{12pt}

\noindent{\bf Key words and phrases:} size-biasing; zero-biasing; compound Poisson; infinite divisibility; central limit theorem; (negative) association; Stein's method

\vspace{12pt}

\noindent{\bf MSC 2020 subject classification:} 60F05; 60E07; 60E15; 62E17

\section{Introduction}\label{sec:intro}

There are well-known connections between infinite divisibility and several of the coupling techniques that are employed frequently in conjunction with Stein's method for probability approximation, particularly in Poisson, compound Poisson and Gaussian approximation results. Two such couplings are the size- and zero-biased couplings; see \eqref{eq:sb_def} and \eqref{eq:zb_def} below for definitions of these concepts. In Gaussian approximation results, the proximity of a real-valued random variable $W$ to Gaussian can be measured by the proximity of $W$ and its zero-biased version: \cite{chen11} gives an overview and many applications of this approach. Similarly, explicit error bounds in Poisson and compound Poisson approximations can be derived in many applications via construction of the size-biased version of the underlying random variable. See \cite{barbour92_book} for a detailed discussion of the Poisson case, and \cite{barbour92_paper} for an introduction to Stein's method for compound Poisson approximation and \cite{daly13} for a discussion of connections with size-biasing. We will also make use of non-zero-biasing, defined in \eqref{eq:nz_def} below, an extension of zero-biasing beyond the case of mean-zero random variables. 

It is well-known that either size-biasing or non-zero-biasing a compound Poisson random variable can be achieved by adding an independent increment (see \cite{steutel73} and \cite{daly22}, respectively), and that a mean-zero, infinitely divisible random variable may be zero-biased by adding an independent increment \cite{goldstein24}. While these observations on biasing with an independent increment are familiar, they remain under-exploited in applications of Stein's method. Our overall aim in this note is to demonstrate how these observations can be employed in conjunction with Stein's method through three applications. Although these applications are on the surface rather different, they each employ similar constructions and illustrate a way of exploiting biasing by the addition of an independent increment.

The three applications we will consider are the following:
\begin{enumerate}
\item[(a)] Bounds on the proximity of two mixed Poisson distributions with infinitely divisible mixing distributions, which will be applied to give total variation error bounds in the approximation of certain Bernoulli-weighted sums of independent Poisson random variables by a mixed Poisson with Dickman mixing distribution. Here we use Stein's method for compound Poisson approximation.
\item[(b)] Explicit error bounds in Gaussian approximation for sums of associated or negatively associated random variables without the boundedness assumptions required by earlier results, such as the univariate approximation results established by Goldstein and Wiroonsri \cite{goldstein18} and Wiroonsri \cite{wiroonsri18}. We give applications to simple random sampling and urn models with overflow. This part of our work uses both compound Poisson and Gaussian approximation results from Stein's method.
\item[(c)] A Gaussian approximation bound for a random variable whose first three moments match those of the approximating Gaussian distribution, illustrated by a straightforward application to Student's \emph{t} distribution. This uses Stein's method for Gaussian approximation.
\end{enumerate}

A natural way to exploit ideas of biasing with an independent increment is in the compound Poisson approximation of another compound Poisson distribution (where, as we will see below, proximity can be measured by the distance between the increments obtained when size-biasing each) or in the Gaussian approximation of a mean-zero random variable (where proximity can be measured by the size of the increment obtained when zero-biasing). This is the idea underlying both (a) and (c) above. In (a) we make use of the fact that a mixed Poisson random variable with an infinitely divisible mixing distribution is compound Poisson. In (c) we use this zero-biasing approach to establish a convex ordering that yields a Gaussian approximation bound for infinitely divisible random variables whose first three moments match those of the approximating Gaussian distribution.

Our approach to exploiting biasing with an independent increment in (b) above is less direct. Here we use the fact that our random variable $W$ of interest has a distribution close to compound Poisson; we use results from \cite{daly13} to bound the distance of $W$ from a suitably chosen compound Poisson distribution, and then use an approach based on non-zero-biasing with an independent increment to quantify the distance of this compound Poisson distribution from Gaussian. As we will see, compared to a more direct approach in this setting, our method bypasses some technical difficulties associated with compound Poisson approximation via Stein's method.

The remainder of the paper is structured as follows. We will use Section \ref{sec:cp} to give brief introductions to Stein's method for compound Poisson approximation and the notions of size- and zero-biasing in order to provide the necessary background for the remainder of the paper. In Section \ref{sec:pois} we establish bounds on the proximity of mixed Poisson distributions with infinitely divisible mixing distributions. Central limit theorems with explicit error bounds for sums of associated or negatively associated random variables are derived in Section \ref{sec:normal}. Finally, in Section \ref{sec:ThirdMom} we present a Gaussian approximation result under the assumption of three matching moments, i.e., a vanishing third moment assumption in the standardised case.

\section{Technical preliminaries: Stein's method for compound Poisson approximation}\label{sec:cp}

In this section we give an introduction to size- and zero-biasing, and to Stein's method for compound Poisson approximation. These techniques will be employed in each of the three application areas we discuss in Sections \ref{sec:pois}--\ref{sec:ThirdMom}. In the work that follows we will also need to use Stein's method for Gaussian approximation; this will be introduced as needed in Sections \ref{sec:normal} and \ref{sec:ThirdMom} below.

Given a non-negative random variable $X$ with finite, non-zero mean, its size-biased version, which we denote here by $X^\text{s}$, is defined by
\begin{equation}\label{eq:sb_def}
\mathbb{E}f(X^\text{s})=\frac{\mathbb{E}[Xf(X)]}{\mathbb{E}X}\,,
\end{equation}    
for any function $f:\mathbb{R}^+\to\mathbb{R}$ for which the expectation on the right-hand side exists. We recall that $X$ (or, equivalently, its distribution) is said to be infinitely divisible if, for each $n=1,2,\ldots$, there are independent and identically distributed (i.i.d.) random variables $X_1^{(n)},\ldots,X_n^{(n)}$ such that $X$ is equal in distribution to $X_1^{(n)}+\cdots+X_n^{(n)}$. The observation that $X$ is infinitely divisible if and only if $X^\text{s}$ is equal in distribution to $X+Y$ for some random variable $Y$ which is independent of $X$ is due to Steutel \cite{steutel73}, though he neither uses the language of size-biasing nor requires that $X$ have finite mean. We refer to the survey \cite{arratia19} for a modern discussion of Steutel's result and its applications.

In our work we will employ Steutel's result in the case where $X$ is a compound Poisson random variable on $\mathbb{Z}^+=\{0,1,\ldots\}$, that is, when $X$ may be written in the form $\xi_1+\cdots+\xi_N$, where $\xi,\xi_1,\xi_2,\ldots$ are i.i.d$.$ positive integer-valued random variables, and $N\sim\text{Pois}(\lambda)$ has a Poisson distribution with mean $\lambda>0$. For $j=1,2,\ldots$ we will write $\lambda_j=\lambda\mathbb{P}(\xi=j)$ for the parameters of this compound Poisson distribution. It is well known that a distribution on the non-negative integers with a positive mass at zero is infinitely divisible if and only if it is compound Poisson. Noting that a mixed Poisson distribution with an infinitely divisible mixing distribution is itself infinitely divisible \cite{karlis05} will allow us to use its compound Poisson representation in order to derive bounds on distances between such distributions in Section \ref{sec:pois}.

We will make use of tools from Stein's method for compound Poisson approximation, as developed by \cite{barbour92_paper}. This technique yields explicit error bounds in compound Poisson approximation. Suppose we have the compound Poisson random variable $X$ above, and another non-negative integer-valued random variable $W$ which we have reason to suppose is close to $X$ in distribution. We begin by defining the function $f:\mathbb{Z}^+\to\mathbb{R}$ via the Stein equation
\begin{equation}\label{eq:CPsteinop}
h(k)-\mathbb{E}h(X)=\sum_{i=1}^\infty i\lambda_if(k+i)-kf(k)
\end{equation}
for a given function $h:\mathbb{Z}^+\to\mathbb{R}$, and where $f(0)=0$. For a given class of functions $\mathcal{H}$ we may then write
\begin{equation}\label{eq:CPstein}
\sup_{h\in\mathcal{H}}|\mathbb{E}h(W)-\mathbb{E}h(X)|=\sup_{h\in\mathcal{H}}\left|\sum_{i=1}^\infty i\lambda_i\mathbb{E}f(W+i)-\mathbb{E}[Wf(W)]\right|\,.
\end{equation}
The left-hand side of this equation serves as our measure of proximity of $W$ and $X$. For example, choosing the class $\mathcal{H}$ equal to
\begin{itemize}
\item $\mathcal{H}_{\text{TV}}=\{\mathbf{1}(A):A\subseteq\mathbb{Z}^+\}$, where $\mathbf{1}(\cdot)$ is an indicator function, gives the total variation distance, in which case we denote the left-hand side of \eqref{eq:CPstein} by $d_\text{TV}(W,X)$.
\item $\mathcal{H}_{\text{K}}=\{\mathbf{1}(\cdot\leq x):x\in\mathbb{R}\}$ gives the Kolmogorov (or uniform) distance, denoted by $d_\text{K}(W,X)$.
\item $\mathcal{H}_{\text{W}}=\{h:|h(x)-h(y)|\leq|x-y|\}$ gives the Wasserstein distance, denoted by $d_\text{W}(W,X)$. Note that for real-valued $W$ and $X$ we have $d_\text{W}(W,X)=\int_\mathbb{R}|\mathbb{P}(W\leq x)-\mathbb{P}(X\leq x)|\,\text{d}x$.
\item $\mathcal{H}_2=\{h:|h(x)-h(y)|\leq|x-y|,|h^\prime(x)-h^\prime(y)|\leq|x-y|\}$ gives the Wasserstein-2 distance, denoted by $d_2(W,X)$.
\end{itemize}
We bound these distances by instead bounding the right-hand side of \eqref{eq:CPstein}, taking advantage of the fact that we do not need to consider a coupling of $W$ and $X$ when doing so. In order to bound the right-hand side of \eqref{eq:CPstein} we need to control the behaviour of $f$. It is typically sufficient to have bounds on the quantities
\begin{equation}\label{eq:SteinFactorDef}
M_0^{(X)}(\mathcal{H})=\sup_{h\in\mathcal{H}}\lVert f\rVert,\text{ and }M_1^{(X)}(\mathcal{H})=\sup_{h\in\mathcal{H}}\lVert \Delta f\rVert\,,
\end{equation}
where $\lVert f\rVert=\sup_{x}|f(x)|$ and $\Delta f=f(k+1)-f(k)$ for any function $f$. Unfortunately, good bounds on these quantities are available only under relatively restrictive assumptions on the parameters $\lambda_i$ of $X$. For example, Barbour et al$.$ \cite[Theorem 4]{barbour92_paper} show that 
\[
M_0^{(X)}(\mathcal{H}_{\text{TV}}),M_1^{(X)}(\mathcal{H}_{\text{TV}})\leq\min\left\{1,\frac{1}{\lambda_1}\right\}e^\lambda\,,
\]
and that this dependence on $\lambda$ cannot be improved in general. Such bounds are useful only when $\lambda$ is small. In some cases bounds of a better order are available. Under the assumption that $j\lambda_j\geq(j+1)\lambda_{j+1}$ for each $j$, Barbour et al$.$ \cite[Theorem 5]{barbour92_paper} give improved bounds, and similar improvements are available under the alternative assumption that 
\begin{equation}\label{eq:bx99}
\frac{\sum_{j=1}^\infty j(j-1)\lambda_j}{\sum_{j=1}^\infty j\lambda_j}<\frac{1}{2}\,;
\end{equation}
see Theorem 2.5 of \cite{barbour99}. We will discuss these improved bounds in more detail in Section \ref{sec:pois} below, where we will make use of them when deriving our mixed Poisson results.  

The form of the equation \eqref{eq:CPsteinop} can be motivated by noting that if $\mathbb{E}W=\mathbb{E}X=\lambda\mathbb{E}\xi$, then
\[
\sum_{i=1}^\infty i\lambda_i\mathbb{E}f(W+i)-\mathbb{E}[Wf(W)]=\mathbb{E}W\left[\mathbb{E}f(W+\xi^\text{s})-\mathbb{E}f(W^\text{s})\right]\,,
\]
which is easily shown to be zero if $W$ has the same distribution as $X$, thus explicitly identifying the independent increment $Y$ obtained when size-biasing $X$ as the size-biased version of the summand $\xi$. 

The form of \eqref{eq:CPsteinop} can be similarly motivated by using zero-biasing in place of size-biasing. For a random variable $X$ with zero mean and positive, finite variance, the zero-biased version $X^\text{z}$ is defined by
\begin{equation}\label{eq:zb_def}
\mathbb{E}f^\prime(X^\text{z})=\frac{\mathbb{E}[Xf(X)]}{\mathbb{E}[X^2]}\,,
\end{equation} 
for all Lipschitz functions $f:\mathbb{R}\to\mathbb{R}$ for which the expectation on the right-hand side exists. This definition was introduced by Goldstein and Reinert \cite{goldstein97} in the context of Gaussian approximation by Stein's method, noting that the mean-zero Gaussian distribution is the unique fixed-point of this transformation. D\"obler \cite{dobler17} establishes the existence of two generalisations of zero-biasing which relax the restriction of $X$ having mean zero. For a random variable $X$ with $\mathbb{E}[X^2]>0$ we define
\begin{itemize}
\item the non-zero-biased version of $X$, denoted by $X^\text{nz}$, by
\begin{equation}\label{eq:nz_def}
\mathbb{E}f^\prime(X^\text{nz})=\frac{\mathbb{E}[(X-\mathbb{E}X)f(X)]}{\text{Var}(X)}\,,
\end{equation}
for all Lipschitz functions $f:\mathbb{R}\to\mathbb{R}$ for which the expectation on the right-hand side exists.
\item the generalised-zero-biased version of $X$, denoted by $X^\text{gz}$, by
\begin{equation}\label{eq:gz_def}
\mathbb{E}f^\prime(X^\text{gz})=\frac{\mathbb{E}[X(f(X)-f(0))]}{\mathbb{E}[X^2]}\,,
\end{equation}
for all Lipschitz functions $f:\mathbb{R}\to\mathbb{R}$ for which the expectation on the right-hand side exists.
\end{itemize}
It is known that if $X$ has the compound Poisson distribution defined above then $X^\text{nz}$ is equal in distribution to $X+\xi^\text{gz}$, where these two summands are independent; see Lemma 2.4 of \cite{daly22}. By the definitions \eqref{eq:nz_def} and \eqref{eq:gz_def}, and noting that $\text{Var}(X)=\lambda\mathbb{E}[\xi^2]$, we therefore have
\begin{align*}
\mathbb{E}[(X-\mathbb{E}X)f(X)]&=\text{Var}(X)\mathbb{E}f^\prime(X+\xi^\text{gz})=\frac{\text{Var}(X)}{\mathbb{E}[\xi^2]}\mathbb{E}[\xi(f(X+\xi)-f(X))]\\
&=\lambda\mathbb{E}[\xi(f(X+\xi)-f(X))]=\lambda\mathbb{E}[\xi f(X+\xi)]-\mathbb{E}X\mathbb{E}f(X)\,,
\end{align*}
so that $\mathbb{E}[Xf(X)]=\lambda\mathbb{E}[\xi f(X+\xi)]$ and we again obtain the characterisation underlying the choice of functional form in \eqref{eq:CPsteinop}. We will exploit this close connection between notions of zero-biasing, which have their roots in Stein's method for Gaussian approximation, and the equations at the heart of Stein's method for compound Poisson approximation in Section \ref{sec:normal}, where we establish central limit theorems with explicit error bounds for sums of associated or negatively associated random variables. 

Also related to this, but away from the compound Poisson setting, is the observation that the zero-biased version $X^\text{z}$ of an infinitely divisible random variable $X$ with zero mean is equal in distribution to $X+Y^\prime$ for some random variable $Y^\prime$ independent of $X$; see the recent work \cite{goldstein24} exploiting and developing this close connection between zero-biasing and infinite divisibility. As we will see in Section \ref{sec:ThirdMom}, this implies that if $X$ also has a vanishing third moment, there is a convex ordering between $X$ and $X^\text{z}$. We use this ordering to establish a Gaussian approximation result for $X$ under the assumption that the first three moments of $X$ match those of the approximating Gaussian distribution. Here, as in Section \ref{sec:normal}, we will use the tools of Stein's method for Gaussian approximation, which we introduce further as needed below.

Finally in this preliminary section, we refer the interested reader to the work of Arras and Houdr\'e \cite{arras19}, who develop the tools of Stein's method for approximation by an infinitely divisible distribution with finite first moment.

\section{Poisson mixtures with infinitely divisible mixing distributions}\label{sec:pois}

In this section our principal aim is to derive bounds on distances between mixed Poisson distributions with infinitely divisible mixing distributions. We exploit the property that such distributions can be size-biased by adding an independent increment, and that we may bound the distance between two such distributions in terms of the proximity of their increments upon size-biasing. This is a first application of our direct approach to exploiting biasing by an independent increment to establish approximations with explicit error bounds. Our main results are presented in Section \ref{subsec:pois_main}, and in Section \ref{subsec:pois_dickman} we give an illustrative application to a Dickman-type limit theorem. The proof of a technical lemma is given in Section \ref{subsec:CPSteinFactors}, following which we conclude this section with some remarks on the case of mixed negative binomial distributions in Section \ref{subsec:negbin}. 

A random variable $Z$ is said to have a mixed Poisson distribution with (non-negative, real-valued) mixing distribution $\xi$, written $Z\sim\text{MP}(\xi)$, if $Z|\xi\sim\mbox{Pois}(\xi)$ has a Poisson distribution with mean $\xi$. That is, if $\mathbb{P}(Z=j)=\frac{\mathbb{E}[e^{-\xi}\xi^j]}{j!}$ for $j=0,1,\ldots$. The family of mixed Poisson distributions is an important one with numerous applications: see \cite{karlis05} for a discussion of some areas of application.

We will focus on the case where the mixing distribution $\xi$ is infinitely divisible, since this will allow us to exploit infinite divisibility of the mixed Poisson random variable $Z$ (see Proposition 8 of \cite{karlis05}). Examples of infinitely divisible distributions on the non-negative real line include the Poisson and compound Poisson distributions, geometric and negative binomial distributions, exponential and gamma distributions, Weibull distributions with shape parameter at most 1, Pareto distributions, \emph{F}-distributions, log-normal distributions, the Dickman distribution, and distributions with a log-convex density function. See Remark 8.12 of \cite{sato99} and Section 11.4 of \cite{arratia19}.   

Since $\xi$ is infinitely divisible, the size-biased version $\xi^\text{s}$ is equal in distribution to $\xi+\eta$, where $\eta$ is a non-negative random variable independent of $\xi$. Then, since $Z^\text{s}\sim\text{MP}(\xi^\text{s})+1$ (see Lemma 2.4 of \cite{arratia19}), we have that $Z^\text{s}$ is equal in distribution to $Z+Y+1$, where $Y\sim\text{MP}(\eta)$ is independent of $Z$. From the definition \eqref{eq:sb_def} of size-biasing we therefore have that, for all $f:\mathbb{Z}^+\to\mathbb{R}$ for which the expectation exists,
\[
\mathbb{E}[Zf(Z)]=\mathbb{E}\xi\sum_{i=1}^\infty\mathbb{P}(Y=i-1)\mathbb{E}f(Z+i)=\mathbb{E}\xi\sum_{i=1}^\infty\frac{1}{(i-1)!}\mathbb{E}[e^{-\eta}\eta^{i-1}]\mathbb{E}f(Z+i)\,.
\]
Following well-established techniques in Stein's method (see, for example, \cite{ross11} for an introduction), this motivates us to define the following Stein equation to assess the proximity of $Z$ to another random variable: For a given function $h:\mathbb{Z}^+\to\mathbb{R}$ we let $f=f_h:\mathbb{Z}^+\to\mathbb{R}$ satisfy $f(0)=0$ and
\begin{equation}\label{eq:MPSteinop}
h(k)-\mathbb{E}h(Z)=\mathbb{E}\xi\sum_{i=1}^\infty\frac{1}{(i-1)!}\mathbb{E}[e^{-\eta}\eta^{i-1}]f(k+i)-kf(k)
\end{equation}
for $k\in\mathbb{Z}^+$. We recognise this as a Stein equation of the form \eqref{eq:CPsteinop} used in compound Poisson approximation problems by Barbour et al$.$ \cite{barbour92_paper}, among many others, with
\begin{equation}\label{eq:lambda_def}
\lambda_i=\frac{\mathbb{E}\xi}{i!}\mathbb{E}[e^{-\eta}\eta^{i-1}]
\end{equation}
for $i\geq1$. We will employ this to assess the proximity of two mixed Poisson distributions by replacing the variable $k$ by another mixed Poisson random variable. Taking expectations, absolute values, and the supremum over $h\in\mathcal{H}$ then gives an equation of the form \eqref{eq:CPstein} that allows us to assess the distance between $Z$ and this second mixed Poisson random variable.

In order to carry out this programme we will need bounds on the solution to \eqref{eq:MPSteinop}.
\begin{lemma} \label{lem:CPSteinFactors}
Let $f$ be the solution to \eqref{eq:MPSteinop} for a given function $h:\mathbb{Z}^+\to\mathbb{R}$, and let $M_0^{(Z)}(\mathcal{H})$ and $M_1^{(Z)}(\mathcal{H})$ be defined analogously to \eqref{eq:SteinFactorDef}. Then
\begin{equation}\label{eq:MPSteinFactor}
M_j^{(Z)}(\mathcal{H}_\text{K})\leq M_j^{(Z)}(\mathcal{H}_\text{TV})\leq\min\left\{1,\frac{1}{(\mathbb{E}\xi)\mathbb{E}e^{-\eta}}\right\}e^\lambda
\end{equation}
for $j=0,1$, where $\lambda=(\mathbb{E}\xi)\mathbb{E}\left[\frac{1}{Y+1}\right]$ and $Y\sim\text{MP}(\eta)$. Furthermore,
\begin{enumerate}
\item[(a)] If $i!\mathbb{E}[e^{-\eta}\eta^{i-1}]\geq(i-1)!\mathbb{E}[e^{\eta}\eta^i]$ for all $i=1,2,\ldots$, then
\begin{align*}
M_0^{(Z)}(\mathcal{H}_\text{K})&\leq\min\left\{1,\sqrt{\frac{2}{e(\mathbb{E}\xi)\mathbb{E}e^{-\eta}}}\right\}\,,\text{ }
M_1^{(Z)}(\mathcal{H}_\text{K})\leq\min\left\{\frac{1}{2},\frac{1}{(\mathbb{E}\xi)\mathbb{E}e^{-\eta}+1}\right\}\,,\\
M_0^{(Z)}(\mathcal{H}_\text{TV})&\leq\min\left\{1,\frac{2}{\sqrt{\beta}}\right\}\,,\text{ and }
M_1^{(Z)}(\mathcal{H}_\text{TV})\leq\min\left\{1,\frac{1}{\beta}\left(\frac{1}{4\beta}+\log^+2\beta\right)\right\}\,,
\end{align*}
where $\beta=(\mathbb{E}\xi)(\mathbb{E}e^{-\eta}-\mathbb{E}[\eta e^{-\eta}])$ and $\log^+$ denotes the positive part of the natural logarithm.
\item[(b)] If $\text{Var}(\xi)<\frac{1}{2}\mathbb{E}\xi$, then
\begin{align*}
M_0^{(Z)}(\mathcal{H}_\text{K})\leq M_0^{(Z)}(\mathcal{H}_\text{TV})\leq\frac{\sqrt{\mathbb{E}\xi}}{\mathbb{E}\xi-2\text{Var}(\xi)}\,,\text{ and }
M_1^{(Z)}(\mathcal{H}_\text{K})\leq M_1^{(Z)}(\mathcal{H}_\text{TV})\leq\frac{1}{\mathbb{E}\xi-2\text{Var}(\xi)}\,.
\end{align*}
\end{enumerate}
\end{lemma}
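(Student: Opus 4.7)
My plan is to observe that \eqref{eq:MPSteinop} is exactly a compound Poisson Stein equation of the form \eqref{eq:CPsteinop} with parameters $\lambda_i = \frac{\mathbb{E}\xi}{i!}\mathbb{E}[e^{-\eta}\eta^{i-1}]$, so that each bound in the statement reduces to quoting a known Stein-factor bound for compound Poisson approximation and expressing the relevant quantities in closed form in terms of $\xi$ and $\eta$.

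For the general bound \eqref{eq:MPSteinFactor}, I would compute $\lambda=\sum_{i\geq 1}\lambda_i$ by interchanging sum and expectation, obtaining $\lambda=(\mathbb{E}\xi)\mathbb{E}[(1-e^{-\eta})/\eta]$; applying the same Fubini-type manipulation to the series expansion of $\mathbb{E}[1/(Y+1)]$ for $Y\sim\text{MP}(\eta)$ shows that the two expressions coincide with the $\lambda$ in the statement. Since $\lambda_1=(\mathbb{E}\xi)\mathbb{E}e^{-\eta}$ is immediate, Theorem~4 of \cite{barbour92_paper} gives the total variation bound, and the Kolmogorov inequality is automatic from $\mathcal{H}_\text{K}\subseteq\mathcal{H}_\text{TV}$.

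For part (a), clearing factorials shows the stated hypothesis to be equivalent to the monotonicity requirement $j\lambda_j\geq(j+1)\lambda_{j+1}$, so that Theorem~5 of \cite{barbour92_paper} and the corresponding Kolmogorov-distance refinements from the compound Poisson Stein's method literature apply. A short calculation gives $\lambda_1-2\lambda_2=(\mathbb{E}\xi)(\mathbb{E}e^{-\eta}-\mathbb{E}[\eta e^{-\eta}])=\beta$, and substituting this alongside $\lambda_1=(\mathbb{E}\xi)\mathbb{E}e^{-\eta}$ into the respective bounds yields the four inequalities. For part (b), the key computations are $\sum_{j\geq 1}j\lambda_j=\mathbb{E}\xi$ and $\sum_{j\geq 1}j(j-1)\lambda_j=(\mathbb{E}\xi)\mathbb{E}\eta$. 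The first is routine; for the second I would invoke Steutel's representation $\xi^\text{s}\stackrel{d}{=}\xi+\eta$ with independent $\xi,\eta$ together with the size-biasing identity $\mathbb{E}\xi^\text{s}=\mathbb{E}\xi^2/\mathbb{E}\xi$ to obtain $\mathbb{E}\eta=\text{Var}(\xi)/\mathbb{E}\xi$, so that $\sum j(j-1)\lambda_j=\text{Var}(\xi)$. Hence condition \eqref{eq:bx99} collapses to $\text{Var}(\xi)<\frac{1}{2}\mathbb{E}\xi$, and Theorem~2.5 of \cite{barbour99} delivers the two bounds of (b) directly.

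The arithmetic throughout is elementary; the only real obstacle I anticipate is pinning down the exact form of the compound Poisson Stein-factor bounds in each regime --- particularly the Kolmogorov-distance refinements used in (a), which are not part of the original Barbour--Holst--Janson total-variation formulation --- and checking that their hypotheses translate, via the formula for $\lambda_i$ above, into the mixed-Poisson hypotheses stated here in terms of $\xi$ and $\eta$.
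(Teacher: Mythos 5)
Your proposal follows the paper's own proof essentially verbatim: identify \eqref{eq:MPSteinop} as a compound Poisson Stein equation with $\lambda_i$ as in \eqref{eq:lambda_def}, compute $\sum_i\lambda_i=(\mathbb{E}\xi)\mathbb{E}[1/(Y+1)]$, $\lambda_1=(\mathbb{E}\xi)\mathbb{E}e^{-\eta}$, $\lambda_1-2\lambda_2=\beta$, and the ratio in \eqref{eq:bx99} (which equals $\mathbb{E}\eta=\text{Var}(\xi)/\mathbb{E}\xi$ via Steutel's representation), then quote the standard compound Poisson Stein-factor bounds. The only loose end, the unnamed Kolmogorov-distance source in (a), is Proposition 1.1 of \cite{barbour00} (with the total variation bound on $M_0^{(Z)}$ coming from Proposition 2 of \cite{barbour92_paper}), exactly as the paper cites.
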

We defer the proof of Lemma \ref{lem:CPSteinFactors} to Section \ref{subsec:CPSteinFactors} below. The upper bound \eqref{eq:MPSteinFactor} is rather poor for moderate or large $\lambda$, but better bounds are available under the conditions of either part (a) or part (b). The condition in part (b) is typically more straightforward to check, while that in part (a) may be less clear. We note that the condition in part (a) is equivalent to $\mathbb{P}(Y=i)\leq\mathbb{P}(Y=i-1)$ for each $i\geq1$, where $Y\sim\text{MP}(\eta)$. By Corollary 2.7 of \cite{balabdaoui19}, this condition holds if $\eta$ is supported on a subset of $[0,1]$. Examples for which this holds include the following:
\begin{itemize}
\item If $\xi$ has a Poisson distribution then $\xi^\text{s}$ is equal in distribution to $\xi+1$, i.e., $\eta=1$ almost surely, and
\item If $\xi\sim\text{Bin}(n,p)$ has a binomial distribution then $\xi^\text{s}\sim\text{Bin}(n-1,p)+1$, i.e., $\eta$ has a Bernoulli distribution with mean $1-p$.
\end{itemize}
Other examples in which $Y$ has a monotone decreasing mass function include the case where $\xi$ has a gamma distribution (so that $Z$ has a negative binomial distribution, and upon size-biasing we obtain an independent increment $Y$ which has a geometric distribution), and the case where $\xi$ has a Dickman distribution (see Section \ref{subsec:pois_dickman} below for further details).

\subsection{Bounds on distances between mixed Poisson distributions}\label{subsec:pois_main}

In this section we make use of the framework outlined above to establish bounds on distances between mixed Poisson distributions with infinitely divisible mixing distributions. Throughout this section we let $Z\sim\text{MP}(\xi)$ and $W\sim\text{MP}(\mu)$. We are motivated by Lemma 4(b) of Gr\"ubel and Stefanoski \cite{grubel05}, who showed that for any such mixed Poisson random variables
\begin{equation}\label{eq:grubel}
d_\text{W}(W,Z) \leq d_\text{W}(\mu,\xi)\,.
\end{equation}
This bound can be generalised beyond the Wasserstein distance. For some $k\in\mathbb{Z}^+$, we assume that the first $k$ moments of $W$ match those of $Z$ (or, equivalently, the first $k$ moments of $\xi$ match those of $\mu$). Then for a function $h:\mathbb{Z}^+\to\mathbb{R}$ we define $g:\mathbb{R}^+\to\mathbb{R}$ by $g(x)=\mathbb{E}[h(Z)|\xi=x]$, so that
\[
|\mathbb{E}h(W)-\mathbb{E}h(Z)|=|\mathbb{E}g(\mu)-\mathbb{E}g(\xi)|\leq\frac{1}{(k+1)!}\lVert g^{(k+1)}\rVert\inf_{(\mu,\xi)}\mathbb{E}\left[|\mu-\xi|^{k+1}\right]\,,
\]
by Taylor's theorem, where the infimum is taken over all couplings of $\mu$ and $\xi$. Writing $g(x)=\sum_{j=0}^\infty h(j)\frac{e^{-x}x^j}{j!}$ we have that $g^{(k+1)}(x)=\mathbb{E}[\Delta^{k+1}h(Z)|\xi=x]$, where $\Delta$ is the forward difference operator defined by $\Delta h(j)=h(j+1)-h(j)$. Hence,
\[
|\mathbb{E}h(W)-\mathbb{E}h(Z)|\leq\frac{1}{(k+1)!}\lVert\Delta^{k+1}h\rVert\inf_{(\mu,\xi)}\mathbb{E}\left[|\mu-\xi|^{k+1}\right]\,.
\]
Since we may write $d_\text{W}(\mu,\xi)=\inf_{(\mu,\xi)}\mathbb{E}|\mu-\xi|$, \eqref{eq:grubel} follows by letting $k=0$ and taking the supremum over $h\in\mathcal{H}_\text{W}$. Taking the supremum over different classes of test functions $\mathcal{H}$ leads to bounds in different metrics.

In the case where $\mu$ and $\xi$ are infinitely divisible, it is sometimes more useful to have an alternative bound that involves instead the proximity of the independent increments obtained on size-biasing these random variables. We use the rest of this section to give results of this flavour, and give an illustrative example in which they are useful in Section \ref{subsec:pois_dickman} below.    
For the remainder of this section we will assume that $\xi$ and $\mu$ are infinitely divisible, non-negative random variables. We will write $\xi^\text{s}=\xi+\eta$, where $\eta$ is independent of $\xi$, and similarly write $\mu^\text{s}=\mu+\nu$, where $\nu$ is independent of $\mu$. 
\begin{theorem}\label{thm:MP}
Letting $W$ and $Z$ be Poisson mixtures with infinitely divisible mixing distributions as above,
\[
\sup_{h\in\mathcal{H}}|\mathbb{E}h(W)-\mathbb{E}h(Z)|\leq(\mathbb{E}\xi)M_1^{(Z)}(\mathcal{H})d_\text{W}(\nu,\eta)+M_0^{(Z)}(\mathcal{H})|\mathbb{E}\mu-\mathbb{E}\xi|\,.
\]
\end{theorem}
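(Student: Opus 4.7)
The plan is to start from the Stein equation \eqref{eq:MPSteinop}, evaluate it at $W$ (instead of the integer $k$), and reduce the right-hand side to a difference of two terms of the form $\mathbb{E}f(W+\text{increment}+1)$, one corresponding to the size-biased increment of $\xi$ and the other to that of $\mu$.

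First I would apply \eqref{eq:MPSteinop} with $k$ replaced by $W$ and take expectations, giving
\[
\mathbb{E}h(W)-\mathbb{E}h(Z)=\mathbb{E}\xi\sum_{i=1}^\infty\frac{1}{(i-1)!}\mathbb{E}[e^{-\eta}\eta^{i-1}]\mathbb{E}f(W+i)-\mathbb{E}[Wf(W)]\,.
\]
Since $Y\sim\text{MP}(\eta)$ satisfies $\mathbb{P}(Y=i-1)=\frac{1}{(i-1)!}\mathbb{E}[e^{-\eta}\eta^{i-1}]$, the first term rewrites as $(\mathbb{E}\xi)\mathbb{E}f(W+Y+1)$ with $Y$ independent of $W$. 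For the second term, the same size-biasing argument given in the paper for $Z$ applies verbatim to $W$: since $\mu$ is infinitely divisible, $W^\text{s}$ is equal in distribution to $W+Y'+1$ where $Y'\sim\text{MP}(\nu)$ is independent of $W$, so by \eqref{eq:sb_def},
\[
\mathbb{E}[Wf(W)]=(\mathbb{E}\mu)\mathbb{E}f(W+Y'+1)\,.
\]

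Next I would add and subtract $(\mathbb{E}\xi)\mathbb{E}f(W+Y'+1)$ to get
\[
\mathbb{E}h(W)-\mathbb{E}h(Z)=(\mathbb{E}\xi)\bigl[\mathbb{E}f(W+Y+1)-\mathbb{E}f(W+Y'+1)\bigr]+(\mathbb{E}\xi-\mathbb{E}\mu)\mathbb{E}f(W+Y'+1)\,.
\]
The second term is immediately bounded in absolute value by $\|f\|\,|\mathbb{E}\xi-\mathbb{E}\mu|\leq M_0^{(Z)}(\mathcal{H})|\mathbb{E}\xi-\mathbb{E}\mu|$ after taking the supremum over $h\in\mathcal{H}$. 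For the first term, couple $Y$ and $Y'$ optimally for the Wasserstein distance: since $f$ is defined on $\mathbb{Z}^+$ with $\|\Delta f\|<\infty$, we have $|f(W+Y+1)-f(W+Y'+1)|\leq\|\Delta f\|\,|Y-Y'|$ pointwise, whence
\[
|\mathbb{E}f(W+Y+1)-\mathbb{E}f(W+Y'+1)|\leq\|\Delta f\|\,d_\text{W}(Y,Y')\,.
\]

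Finally, the inequality \eqref{eq:grubel} of Gr\"ubel and Stefanoski gives $d_\text{W}(Y,Y')\leq d_\text{W}(\eta,\nu)$, since $Y\sim\text{MP}(\eta)$ and $Y'\sim\text{MP}(\nu)$. Combining the two estimates and taking the supremum over $h\in\mathcal{H}$ yields the claimed bound. I do not expect any serious obstacle here; the one point to get right is the identification of the independent increment on size-biasing $W$ and the invocation of \eqref{eq:grubel} to transfer the Wasserstein distance from the Poisson mixtures $Y,Y'$ back to the mixing variables $\eta,\nu$.
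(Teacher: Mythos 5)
Your proposal is correct and follows essentially the same route as the paper: the same expansion of the Stein equation \eqref{eq:MPSteinop} at $W$, the same identification of $\mathbb{E}[Wf(W)]=(\mathbb{E}\mu)\mathbb{E}f(W+V+1)$ via the size-biased increment $V\sim\text{MP}(\nu)$, the same add-and-subtract decomposition bounded by $M_1^{(Z)}(\mathcal{H})$ and $M_0^{(Z)}(\mathcal{H})$, and the same final appeal to \eqref{eq:grubel} to pass from $d_\text{W}(Y,V)$ to $d_\text{W}(\eta,\nu)$.
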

\begin{proof}
For $h\in\mathcal{H}$ we let $f$ denote the solution to \eqref{eq:MPSteinop} with $f(0)=0$. Letting $Y\sim\text{MP}(\eta)$ be independent of $W$, we then have that
\begin{align*}
\sup_{h\in\mathcal{H}}|\mathbb{E}h(W)-\mathbb{E}h(Z)| &=\sup_{h\in\mathcal{H}}\left|(\mathbb{E}\xi)\mathbb{E}\sum_{i=1}^\infty\frac{1}{(i-1)!}\mathbb{E}[e^{-\eta}\eta^{i-1}]f(W+i)-\mathbb{E}[Wf(W)]\right|\\
&=\sup_{h\in\mathcal{H}}\left|(\mathbb{E}\xi)\mathbb{E}f(W+Y+1)-(\mathbb{E}\mu)\mathbb{E}f(W^\text{s})\right|\\
&\leq(\mathbb{E}\xi)\sup_{h\in\mathcal{H}}\left|\mathbb{E}f(W+Y+1)-\mathbb{E}f(W^\text{s})\right|
+\sup_{h\in\mathcal{H}}\left|\mathbb{E}f(W^\text{s})\right|\left|\mathbb{E}\mu-\mathbb{E}\xi\right|\,.
\end{align*}
We construct $W^\text{s}$ as $W+V+1$, where $V\sim\text{MP}(\nu)$ is independent of $W$, so that
\begin{align*}
\sup_{h\in\mathcal{H}}|\mathbb{E}h(W)-\mathbb{E}h(Z)|&\leq(\mathbb{E}\xi)\sup_{h\in\mathcal{H}}\left|\mathbb{E}f(W+Y+1)-\mathbb{E}f(W+V+1)\right|+\sup_{h\in\mathcal{H}}\left|\mathbb{E}f(W^\text{s})\right|\left|\mathbb{E}\mu-\mathbb{E}\xi\right|\\
&\leq(\mathbb{E}\xi)M_1^{(Z)}(\mathcal{H})d_\text{W}(V,Y)+M_0^{(Z)}(\mathcal{H})|\mathbb{E}\mu-\mathbb{E}\xi|\\
&\leq(\mathbb{E}\xi)M_1^{(Z)}(\mathcal{H})d_\text{W}(\nu,\eta)+M_0^{(Z)}(\mathcal{H})|\mathbb{E}\mu-\mathbb{E}\xi|\,,
\end{align*}
where the final equality follows from \eqref{eq:grubel}.
\end{proof}
As a corollary we obtain a generalisation of Theorem 1.C(ii) of \cite{barbour92_book} under the additional condition of infinite divisibility, for which we need a little additional notation. From equation (88) of \cite{arratia19}, the characteristic function of $Z$ may be written as
\[
\mathbb{E}e^{iuZ}=\exp\left\{\mathbb{E}\xi\left(ium_\xi(\{0\})+\int_0^\infty\frac{e^{iuy}-1}{y}m_\xi(\text{d}y)\right)\right\}\,,
\]
where $m_\xi$ is the L\'evy measure associated with $\xi$, and is the distribution of the increment $\eta$ obtained on size-biasing; see Section 11.2 of \cite{arratia19}. We will write $m_\xi\geq_\text{st}m_\mu$ to denote that the L\'evy measure of $\xi$ is stochastically larger than that of $\mu$.
\begin{corollary}\label{cor:MP_ordering}
Let $W$ and $Z$ be as in Theorem \ref{thm:MP}, and assume that either $m_\xi\geq_\text{st}m_\mu$ or $m_\mu\geq_\text{st}m_\xi$. Then
\[
\sup_{h\in\mathcal{H}}|\mathbb{E}h(W)-\mathbb{E}h(Z)|\leq(\mathbb{E}\xi)M_1^{(Z)}(\mathcal{H})
\left|\frac{\text{Var}(\mu)}{\mathbb{E}\mu}-\frac{\text{Var}(\xi)}{\mathbb{E}\xi}\right|+M_0^{(Z)}(\mathcal{H})|\mathbb{E}\mu-\mathbb{E}\xi|\,.
\]
\end{corollary}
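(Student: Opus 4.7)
The plan is to apply Theorem~\ref{thm:MP} and simplify the factor $d_\text{W}(\nu,\eta)$ using the stochastic ordering hypothesis. The key observation, stated in the paragraph immediately preceding the corollary, is that the independent increment $\eta$ obtained on size-biasing $\xi$ has distribution equal to the L\'evy measure $m_\xi$, and similarly $\nu$ has distribution $m_\mu$. Hence the hypothesis $m_\xi\geq_\text{st}m_\mu$ (or the reverse) is exactly the statement that $\eta$ and $\nu$ are stochastically ordered.

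Given this, I would use the fact that when $\eta\geq_\text{st}\nu$ one has $F_\eta(x)\leq F_\nu(x)$ for all $x$, so the formula $d_\text{W}(\nu,\eta)=\int_\mathbb{R}|F_\nu(x)-F_\eta(x)|\,\text{d}x$ recalled in the introduction collapses to
\[
d_\text{W}(\nu,\eta)=\int_\mathbb{R}\bigl(F_\nu(x)-F_\eta(x)\bigr)\,\text{d}x=\mathbb{E}\eta-\mathbb{E}\nu\,,
\]
and symmetrically in the reverse case, giving $d_\text{W}(\nu,\eta)=|\mathbb{E}\eta-\mathbb{E}\nu|$ under either ordering assumption.

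Next I would compute $\mathbb{E}\eta$ directly from the definition of size-biasing. Since $\xi^\text{s}\stackrel{d}{=}\xi+\eta$ with $\eta$ independent of $\xi$, and $\mathbb{E}\xi^\text{s}=\mathbb{E}[\xi^2]/\mathbb{E}\xi$ by \eqref{eq:sb_def}, we obtain
\[
\mathbb{E}\eta=\mathbb{E}\xi^\text{s}-\mathbb{E}\xi=\frac{\mathbb{E}[\xi^2]-(\mathbb{E}\xi)^2}{\mathbb{E}\xi}=\frac{\text{Var}(\xi)}{\mathbb{E}\xi}\,,
\]
and the analogous identity $\mathbb{E}\nu=\text{Var}(\mu)/\mathbb{E}\mu$. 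Substituting these into the bound $d_\text{W}(\nu,\eta)=|\mathbb{E}\eta-\mathbb{E}\nu|$ and then into Theorem~\ref{thm:MP} delivers the claimed inequality immediately. There is no real obstacle to this argument; the only subtle point is the identification of the L\'evy measure with the distribution of the size-biasing increment, but this has already been established in the text just before the corollary, so it can be invoked without further work.
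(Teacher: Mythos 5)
Your proposal is correct and follows essentially the same route as the paper: identify $\eta$ and $\nu$ with the L\'evy measures, use the stochastic ordering to reduce $d_\text{W}(\nu,\eta)$ to $|\mathbb{E}\eta-\mathbb{E}\nu|$, compute the increment means via \eqref{eq:sb_def} as $\text{Var}(\xi)/\mathbb{E}\xi$ and $\text{Var}(\mu)/\mathbb{E}\mu$, and plug into Theorem \ref{thm:MP}. The only difference is that you spell out the CDF-integral justification for the Wasserstein identity, which the paper states without elaboration.
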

\begin{proof}
We complete the proof under the assumption that $m_\xi\geq_\text{st}m_\mu$; the argument is analogous if the reverse ordering holds. This implies that $\eta$ is stochastically larger than $\nu$, and hence that
\[
d_\text{W}(\nu,\eta)=\mathbb{E}\eta-\mathbb{E}\nu=\mathbb{E}\xi^\text{s}-\mathbb{E}\xi-\mathbb{E}\mu^\text{s}+\mathbb{E}\mu=\frac{\text{Var}(\xi)}{\mathbb{E}\xi}-\frac{\text{Var}(\mu)}{\mathbb{E}\mu}\,,
\]
where the final equality uses the definition \eqref{eq:sb_def}. The result then follows from Theorem \ref{thm:MP}.
\end{proof}
We note that if $Z$ has a Poisson distribution then the associated L\'evy measure is a point mass at zero, so Corollary \ref{cor:MP_ordering} yields the bound of Theorem 1.C(ii) of \cite{barbour92_book} in the special case where $Z$ is Poisson with mean equal to $\mathbb{E}\mu$ and $\mathcal{H}=\mathcal{H}_\text{TV}$, though we note that we additionally require $W$ to have infinitely divisible mixing distribution. 

Bounds on the terms of the form $M_j^{(Z)}(\mathcal{H})$ appearing in Theorem \ref{thm:MP} and Corollary \ref{cor:MP_ordering} are given in Lemma \ref{lem:CPSteinFactors}; in applying these results the mixed Poisson distribution playing the role of $Z$ should be chosen as the one which gives such bounds of the best order. 

\subsection{Example: A mixed Poisson--Dickman approximation}\label{subsec:pois_dickman}

To demonstrate Theorem \ref{thm:MP} we consider here an illustrative example. For $n=1,2,\ldots$ and some $c>0$, we let
\begin{equation}\label{eq:MPD_Wdef}
W=\sum_{k=1}^nB_kP_k\,,
\end{equation}
where $B_1,B_2,\ldots$ is a sequence of independent Bernoulli random variables with $\mathbb{E}B_k=1/k$, and $P_1,P_2,\ldots$ is a sequence of independent Poisson random variables, also independent of the $B_k$, with $P_k\sim\text{Pois}(ck/n)$. 

We may write $W\sim\text{MP}(c\xi)$, where $\xi=\frac{1}{n}\sum_{k=1}^nkB_k$. This $\xi$ is a weighted sum of Bernoulli random variables of the form studied by Bhattacharjee and Goldstein \cite{bhattacharjee19} and Bhattacharjee and Schulte \cite{bhattacharjee22}, motivated by applications to the Quickselect algorithm and to records processes, who provide explicit bounds in the approximation of such a $\xi$ by a Dickman distribution in the Wasserstein-2 and Kolmogorov distances, respectively. Dickman approximations of more general weighted sums of the form $\sum_{k=1}^nB_kY_k$ for some independent random variables $Y_1,Y_2,\ldots$ whose means grow linearly with $k$ are also studied in \cite{bhattacharjee19,bhattacharjee22}, with results that perform particularly well when the variances of the $Y_k$ are small. Our present example is motivated by an alternative limit which may arise when this is not the case. 

Throughout this section we let $D\sim\text{Dickman}(1)$ have a standard Dickman distribution with $\mathbb{E}D=1$ and density function $e^{-\gamma}\rho(x)$ for $x>0$, where $\rho$ is Dickman's function and $\gamma=0.5772\ldots$ is Euler's constant. Given the Dickman approximation results for $\xi$ discussed above, it would be reasonable to approximate $W$ by $Z\sim\text{MP}(cD)$. We do this in the total variation distance for concreteness. A natural first approach to this would be to write
\[
d_\text{TV}(W,Z)\leq d_\text{W}(W,Z)\leq cd_\text{W}(\xi,D)\,,
\]
where the first inequality uses the comparability of $d_\text{TV}$ and $d_\text{W}$ on the integers, and the second uses both \eqref{eq:grubel} and scaling properties of Wasserstein distance. Unfortunately, bounds on $d_\text{W}(\xi,D)$ do not seem to be readily available in the literature, so we use a slightly less direct approach via Theorem \ref{thm:MP}. Demonstrating this approach is the main focus of this section.

From Example 11.12 of \cite{arratia19}, we have that $D^\text{s}$ is equal in distribution to $D+U$, where $U\sim\text{Unif}(0,1)$ has a uniform distribution and is independent of $D$. We may then construct
$(cD)^\text{s}=cD^\text{s}=cD+V$, where $V\sim\text{Unif}(0,c)$ is independent of $D$. We may therefore construct $Z^\text{s}$ as $Z+Y+1$, where $Y\sim\text{MP}(V)$ is independent of $Z$. We note that 
\[
\mathbb{P}(Y=j)=\frac{1}{cj!}\int_0^ce^{-v}v^j\,\text{d}v=\frac{1}{c}\mathbb{P}(X>j)\,,
\]
where $X\sim\text{Pois}(c)$, so that the mass function of $Y$ is monotonically decreasing. For a given function $h:\mathbb{Z}^+\to\mathbb{R}$, letting $f$ denote the solution to
\[
h(k)-\mathbb{E}h(Z)=c\sum_{i=1}^\infty\frac{1}{(i-1)!}\mathbb{E}[e^{-V}V^{i-1}]f(k+i)-kf(k)
\]
with $f(0)=0$, Lemma \ref{lem:CPSteinFactors}(a) thus gives us that
\[
M_1^{(Z)}(\mathcal{H}_\text{TV})\leq\min\left\{1,\frac{1}{\beta}\left(\frac{1}{4\beta}+\log^+2\beta\right)\right\}\,,
\]
where $\beta=c(\mathbb{E}e^{-V}-\mathbb{E}[Ve^{-V}])=ce^{-c}$.

To construct $W^\text{s}$ we use well-known rules for size-biasing a sum of independent random variables (see Section 2.4 of \cite{arratia19}). Let $I$ be uniformly distributed on $\{1,\ldots,n\}$, independent of all else, and construct $W^\text{s}=W-B_IP_I+(B_IP_I)^\text{s}$. A straightforward calculation confirms that $(B_IP_I)^\text{s}$ is equal in distribution to $P_I+1$, so that $W^\text{s}$ is equal in distribution to $W+(1-B_I)P_I+1$, where $(1-B_I)P_I\sim\text{MP}((1-B_I)\frac{cI}{n})$.

From Theorem \ref{thm:MP} we thus have
\begin{equation}\label{eq:MPD_TV}
d_\text{TV}(W,Z)\leq cM_1^{(Z)}(\mathcal{H}_\text{TV})d_W\left((1-B_I)\frac{cI}{n},V\right)= c^2M_1^{(Z)}(\mathcal{H}_\text{TV})d_W\left((1-B_I)\frac{I}{n},U\right)\,.
\end{equation}
Furthermore,
\[
d_W\left((1-B_I)\frac{I}{n},U\right)=\int_0^1\left|\mathbb{P}\left((1-B_I)\frac{I}{n}\leq x\right)-x\right|\,\text{d}x\,,
\]
and
\begin{align*}
\mathbb{P}\left((1-B_I)\frac{I}{n}\leq x\right)&=\frac{1}{n}\sum_{k=1}^n\mathbb{P}\left((1-B_k)\frac{k}{n}\leq x\right)=\frac{1}{n}\sum_{k=1}^n\mathbb{P}\left(B_k\geq1-\frac{nx}{k}\right)\\
&=\frac{1}{n}\sum_{k=1}^{\lfloor nx\rfloor}1+\frac{1}{n}\sum_{k=\lfloor nx\rfloor+1}^n\frac{1}{k}
=\frac{\lfloor nx\rfloor}{n}+\frac{1}{n}\sum_{k=\lfloor nx\rfloor+1}^n\frac{1}{k}\,,
\end{align*}
where $\lfloor\cdot\rfloor$ is the floor function. Hence, $d_W\left((1-B_I)\frac{I}{n},U\right)\leq a+b$, where
\begin{align*}
a&=\int_0^1\left|\frac{\lfloor nx\rfloor}{n}-x\right|\,\text{d}x=\int_0^1\left(x-\frac{\lfloor nx\rfloor}{n}\right)\,\text{d}x=\frac{1}{2}-\frac{n-1}{2n}=\frac{1}{2n}\,,\\
b&=\frac{1}{n}\int_0^1\sum_{k=\lfloor nx\rfloor+1}^n\frac{1}{k}\,\text{d}x=\frac{1}{n}\int_0^1\left(H_n-H_{\lfloor nx\rfloor}\right)\,\text{d}x\,,
\end{align*}
and $H_n=\sum_{k=1}^n1/k$ is the $n$th harmonic number. Since $\log(n+1)<H_n<\log(n)+1$ for each $n$, we have
\[
H_n-H_{\lfloor nx\rfloor}\leq1+\log\left(\frac{n}{\lfloor nx\rfloor+1}\right)\leq1-\log(x)\,,
\]
and $b\leq\frac{1}{n}\int_0^1(1-\log x)\,\text{d}x=\frac{2}{n}$. Hence, $d_W\left((1-B_I)\frac{I}{n},U\right)\leq\frac{5}{2n}$ and from \eqref{eq:MPD_TV} we obtain the following bound.
\begin{proposition}
Let $W$ be as in \eqref{eq:MPD_Wdef} and $Z\sim\text{MP}(cD)$. Then
\[
d_\text{TV}(W,Z)\leq\frac{5c^2}{2n}\min\left\{1,\frac{1}{\beta}\left(\frac{1}{4\beta}+\log^+2\beta\right)\right\}\,,
\]
where $\beta=ce^{-c}$.
\end{proposition}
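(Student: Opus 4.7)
The plan is to assemble the three estimates already developed in the discussion preceding the statement; the proposition is essentially a bookkeeping step that collects them into one inequality. The underlying strategy is: apply Theorem \ref{thm:MP} to the two mixed Poisson distributions, bound the Stein factor via Lemma \ref{lem:CPSteinFactors}(a), and bound the Wasserstein distance between the size-biasing increments by $5/(2n)$.

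First, I would verify that Theorem \ref{thm:MP} is applicable with mixing distributions $c\xi$ (for $W$) and $cD$ (for $Z$), both of which are infinitely divisible: $c\xi$ as a sum of independent Bernoullis is a compound Poisson relative (and in any case inherits infinite divisibility through the explicit size-biased increment $(1-B_I)\tfrac{cI}{n}$ constructed above), while $cD$ is a scalar multiple of a Dickman variable and hence infinitely divisible. Crucially, $\mathbb{E}\xi = \frac{1}{n}\sum_{k=1}^n k\cdot\frac{1}{k} = 1 = \mathbb{E}D$, so $\mathbb{E}[c\xi] = \mathbb{E}[cD]$ and the $M_0^{(Z)}$ term in Theorem \ref{thm:MP} drops out entirely, leaving only the first term. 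Specialising $\mathcal{H} = \mathcal{H}_\text{TV}$ and inserting the explicit increments $V \sim \text{Unif}(0,c)$ and $(1-B_I)\tfrac{cI}{n}$ produces the displayed inequality \eqref{eq:MPD_TV}.

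Next, I would substitute the Stein-factor bound from Lemma \ref{lem:CPSteinFactors}(a), whose hypothesis is the monotone decreasing mass function of $Y\sim\text{MP}(V)$; this is confirmed by the identity $\mathbb{P}(Y=j) = \frac{1}{c}\mathbb{P}(X>j)$ with $X \sim \text{Pois}(c)$, which is visibly nonincreasing in $j$. A short computation then gives $\beta = c(\mathbb{E}e^{-V} - \mathbb{E}[Ve^{-V}]) = ce^{-c}$ and hence the stated $M_1^{(Z)}(\mathcal{H}_\text{TV})$ bound. Finally, the Wasserstein estimate $d_W\bigl((1-B_I)\tfrac{I}{n}, U\bigr) \leq \frac{5}{2n}$ — obtained from the triangle-inequality split $a + b$ through the intermediate CDF $\lfloor nx\rfloor/n$, using $\log(n+1) < H_n < \log n + 1$ to control the harmonic-sum residual — combines with the scaling $d_W(cZ_1, cZ_2) = c\, d_W(Z_1,Z_2)$ to convert \eqref{eq:MPD_TV} into the claimed bound.

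No further obstacle remains: all nontrivial ingredients are in place before the statement, and the proof is a one-line assembly. The only point deserving a sentence of justification is the verification of the Lemma \ref{lem:CPSteinFactors}(a) hypothesis, which reduces (via the equivalence noted after Lemma \ref{lem:CPSteinFactors}) to monotonicity of $j \mapsto \mathbb{P}(Y = j)$; this follows from the Poisson-tail representation above, or alternatively from Corollary 2.7 of \cite{balabdaoui19} since $V$ is supported on $[0,c]$ and $Y$ is a Poisson mixture with this monotonically decreasing (in fact constant-density) mixing law.
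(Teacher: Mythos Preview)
Your proposal is correct and matches the paper's own argument exactly: the proposition is simply the concatenation of \eqref{eq:MPD_TV}, the Stein-factor bound from Lemma~\ref{lem:CPSteinFactors}(a) with $\beta=ce^{-c}$, and the estimate $d_W\bigl((1-B_I)I/n,\,U\bigr)\le 5/(2n)$, all of which are established in the text of Section~\ref{subsec:pois_dickman} immediately before the proposition is stated. One small correction to your write-up: $c\xi$ is bounded and nondegenerate, hence \emph{not} infinitely divisible (so your parenthetical justification there is inaccurate---the paper bypasses this by invoking the explicit size-bias construction of $W$ directly), and your alternative appeal to Corollary~2.7 of \cite{balabdaoui19} requires support in $[0,1]$ and so only covers $c\le 1$; keep the Poisson-tail argument $\mathbb{P}(Y=j)=\tfrac{1}{c}\mathbb{P}(X>j)$ as your justification of the monotonicity hypothesis.
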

An approximation result in Kolmogorov distance can be derived similarly, replacing $M_1^{(Z)}(\mathcal{H}_\text{TV})$ with $M_1^{(Z)}(\mathcal{H}_\text{K})$ and using the bound on this latter quantity from Lemma \ref{lem:CPSteinFactors}(a). 

\subsection{Proof of Lemma \ref{lem:CPSteinFactors}}\label{subsec:CPSteinFactors}

We use this section to give the proof of Lemma \ref{lem:CPSteinFactors}. We firstly note that $M_j^{(Z)}(\mathcal{H}_\text{K})\leq M_j^{(Z)}(\mathcal{H}_\text{TV})$ for $j=0,1$ since $\mathcal{H}_\text{K}\subseteq\mathcal{H}_\text{TV}$. The upper bound \eqref{eq:MPSteinFactor} then follows immediately from Theorem 4 of \cite{barbour92_paper}, noting that with $\lambda_i$ given by \eqref{eq:lambda_def} we have
\[
\sum_{i=1}^\infty\lambda_i=\mathbb{E}\xi\sum_{i=0}^\infty\frac{1}{(i+1)!}\mathbb{E}[e^{-\eta}\eta^{i}]=\mathbb{E}\xi\sum_{i=0}^\infty\frac{1}{i+1}\mathbb{P}(Y=i)=(\mathbb{E}\xi)\mathbb{E}\left[\frac{1}{Y+1}\right]\,.
\]

For part (a), we note that the assumption is equivalent to $i\lambda_i\geq(i+1)\lambda_{i+1}$ for all $i\geq1$. The bounds on $M_0^{(Z)}(\mathcal{H}_\text{K})$ and $M_1^{(Z)}(\mathcal{H}_\text{K})$ then follow from Proposition 1.1 of \cite{barbour00}. The bounds on $M_0^{(Z)}(\mathcal{H}_\text{TV})$ and $M_1^{(Z)}(\mathcal{H}_\text{TV})$ follow from Proposition 2 and Theorem 5, respectively, of \cite{barbour92_paper}.

For part (b) we use the results of Barbour and Xia \cite{barbour99}, who give bounds on the quantities $M_j^{(Z)}(\mathcal{H}_\text{TV})$ for $j=0,1$ under the assumption \eqref{eq:bx99}. In our setting we have that 
\begin{align*}
\frac{\sum_{j=1}^\infty j(j-1)\lambda_j}{\sum_{j=1}^\infty j\lambda_j}&=\frac{\sum_{j=1}^\infty\frac{1}{(j-1)!}\mathbb{E}[e^{-\eta}\eta^j]}{\sum_{j=1}^\infty\frac{1}{(j-1)!}\mathbb{E}[e^{-\eta}\eta^{j-1}]}=\frac{\mathbb{E}[e^{-\eta}\sum_{j=1}^\infty\frac{\eta^j}{(j-1)!}]}{\mathbb{E}[e^{-\eta}\sum_{j=1}^\infty\frac{\eta^{j-1}}{(j-1)!}]}\\
&=\mathbb{E}\eta=\mathbb{E}[\xi^\text{s}]-\mathbb{E}\xi=\frac{\mathbb{E}[\xi^2]}{\mathbb{E}\xi}-\mathbb{E}\xi=\frac{\text{Var}(\xi)}{\mathbb{E}\xi}\,,
\end{align*}
so that \eqref{eq:bx99} holds if $\text{Var}(\xi)<\frac{1}{2}\mathbb{E}\xi$. The bounds of part (b) of our lemma then follow immediately from Theorem 2.5 of \cite{barbour99}.  

Finally, we remark that there are bounds on $M_j^{(Z)}(\mathcal{H}_\text{K})$ for $j=0,1$ available under slightly more relaxed conditions than those imposed in parts (a) and (b) of our lemma: see Theorem 3.1 of \cite{barbour98} for a relaxation of the monotonicity condition of part (a), and Theorem 1.1 of \cite{daly17} for a relaxation of the condition in part (b). We do not give further details here to keep our exposition simpler, and since these relaxations do not provide significant benefit in the examples and applications we consider, in which we focus more on bounding the stronger total variation distance. 

\subsection{The mixed negative binomial case}\label{subsec:negbin}

The techniques of this section may also be applied in other settings, though with additional technical challenges in some cases. Beyond the mixed Poisson case, it is natural to consider a mixed negative binomial random variable $Z$ with mass function
\[
\mathbb{P}(Z=j)=\mathbb{E}\left[\frac{\Gamma(j+\xi)}{j!\Gamma(\xi)}(1-p)^jp^\xi\right]\,,
\]
for $j\in\mathbb{Z}^+$, some $p\in[0,1]$ and a non-negative random variable $\xi$, where $\Gamma(\cdot)$ is the gamma function. We denote this by $Z\sim\text{MNB}(\xi,p)$ As before, we will assume that the mixing distribution is infinitely divisible, and write $\xi^\text{s}=\xi+\eta$. In this case, a straightforward calculation using results from Section 2.2 of \cite{arratia19} gives us that we may construct $Z^\text{s}$ as $Z+Y+G+1$, where these three random variables are independent, $Y\sim\text{MNB}(\eta)$ and $G$ has a geometric distribution with mass function $\mathbb{P}(G=j)=p(1-p)^j$ for $j\in\mathbb{Z}^+$. Writing
\begin{align*}
\mathbb{E}[Zf(Z)]&=\frac{(1-p)\mathbb{E}\xi}{p}\sum_{i=1}^\infty\mathbb{P}(Y+G=i-1)\mathbb{E}f(Z+i)\\
&=\mathbb{E}\xi\sum_{i=1}^\infty\sum_{j=0}^{i-1}(1-p)^{j+1}\mathbb{P}(Y=i-j-1)\mathbb{E}f(Z+i)\\
&=\mathbb{E}\xi\sum_{i=1}^\infty\sum_{j=0}^{i-1}\frac{(1-p)^{i}}{(i-j-1)!}\mathbb{E}\left[\frac{\Gamma(i-j-1+\eta)}{\Gamma(\eta)}p^\eta\right]\mathbb{E}f(Z+i)\,,
\end{align*}
the equivalent of \eqref{eq:MPSteinop} in this setting is
\[
h(k)-\mathbb{E}h(Z)=\mathbb{E}\xi\sum_{i=1}^\infty\sum_{j=0}^{i-1}\frac{(1-p)^{i}}{(i-j-1)!}\mathbb{E}\left[\frac{\Gamma(i-j-1+\eta)}{\Gamma(\eta)}p^\eta\right]f(k+i)-kf(k)\,.
\]
Then, for example, to be able to use bounds analogous to those in Lemma \ref{lem:CPSteinFactors}(a), which we recall are typically of a better order than those available without the monotonicity conditions assumed therein, we would need to assume that the inner sum $\sum_{j=0}^{i-1}\frac{(1-p)^{i}}{(i-j-1)!}\mathbb{E}\left[\frac{\Gamma(i-j-1+\eta)}{\Gamma(\eta)}p^\eta\right]$ is monotonically decreasing in $i=1,2,\ldots$. Finding examples or sufficient conditions for this seems to be more challenging than in the corresponding Poisson case. To consider just one example, if $\xi$ is Poisson then $\eta=1$ almost surely and we would need $i(1-p)^i$ to be monotonically decreasing in $i$, which occurs only if $p\geq1/2$. We leave further exploration of this question, and that of other extensions of these techniques, for future research.

\section{Gaussian approximation for sums of associated or negatively associated random variables}\label{sec:normal}

In this section we derive error bounds for Gaussian approximation in the Wasserstein distance of a sum of associated or negatively associated random variables. We will again exploit the property of biasing with an independent increment, though our approach here is less direct than in Section \ref{sec:pois}. We begin with a general Gaussian approximation result (Theorem \ref{thm:Gauss} below) which makes use of the fact that we may non-zero-bias a compound Poisson random variable by adding an independent increment, as discussed in Section \ref{sec:cp}, in conjunction with Stein's method for Gaussian approximation, for which we refer the reader to \cite{chen11} for an introduction, and the compound Poisson approximation techniques already introduced. 

We then go on to apply our Theorem \ref{thm:Gauss} to a sum of associated or negatively associated random variables in Section \ref{subsec:normal_ass}, making use of compound Poisson approximation results from \cite{daly13}. Applications to simple random sampling and urn models with overflow are considered in Sections \ref{subsec:normal_sample} and \ref{subsec:overflow}, respectively.
\begin{theorem}\label{thm:Gauss}
Let $W$ be a real-valued random variable with $\mathbb{E}W=\theta$ and $\text{Var}(W)=\sigma^2$, and $Z\sim\text{N}(\theta,\sigma^2)$ be a Gaussian random variable whose first two moments match those of $W$. Let $\xi$ be a real-valued random variable, independent of $W$, with $\mathbb{E}\xi=\theta/\lambda$ for some $\lambda>0$. Then
\[
d_\text{W}(W,Z)\leq\frac{\mathbb{E}|\xi|^3}{\mathbb{E}[\xi^2]}+\sqrt{\frac{2}{\pi}}\sigma^{-1}|\sigma^2-\lambda\mathbb{E}[\xi^2]|+\sup_{h\in\mathcal{H}_{\text{W}}}\left|\lambda\mathbb{E}[\xi f(W+\xi)]-\mathbb{E}[Wf(W)]\right|\,,
\] 
where $f:\mathbb{R}\to\mathbb{R}$ is the solution of
\begin{equation}\label{eq:GaussStein}
h(x)-\mathbb{E}h(Z)=\sigma^2f^\prime(x)-(x-\theta)f(x)
\end{equation}
for $h\in\mathcal{H}_\text{W}$. 
\end{theorem}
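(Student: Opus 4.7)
The plan is to start from the Gaussian Stein equation \eqref{eq:GaussStein}, which gives
\[
d_\text{W}(W,Z)=\sup_{h\in\mathcal{H}_\text{W}}\left|\sigma^2\mathbb{E}f'(W)-\mathbb{E}[Wf(W)]+\theta\mathbb{E}f(W)\right|\,,
\]
and to introduce the quantity $\lambda\mathbb{E}[\xi f(W+\xi)]$ as a bridge. Writing the integrand as
\[
\bigl(\sigma^2\mathbb{E}f'(W)+\theta\mathbb{E}f(W)-\lambda\mathbb{E}[\xi f(W+\xi)]\bigr)+\bigl(\lambda\mathbb{E}[\xi f(W+\xi)]-\mathbb{E}[Wf(W)]\bigr)
\]
and taking suprema yields the third term of the claimed bound directly from the second bracket.

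The first bracket must absorb into the first two terms. Using independence of $\xi$ from $W$ together with $\lambda\mathbb{E}\xi=\theta$, I would split $\lambda\mathbb{E}[\xi f(W+\xi)]=\theta\mathbb{E}f(W)+\lambda\mathbb{E}[\xi(f(W+\xi)-f(W))]$, so the $\theta\mathbb{E}f(W)$ terms cancel and the first bracket becomes $\sigma^2\mathbb{E}f'(W)-\lambda\mathbb{E}[\xi(f(W+\xi)-f(W))]$. To transform the second summand I apply the generalised zero-bias identity \eqref{eq:gz_def} conditionally on $W$, taking the Lipschitz function $g_W(x)=f(W+x)-f(W)$ (which satisfies $g_W(0)=0$). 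This gives $\mathbb{E}[\xi(f(W+\xi)-f(W))]=\mathbb{E}\xi^2\,\mathbb{E}f'(W+\xi^\text{gz})$ with $\xi^\text{gz}$ independent of $W$, so the first bracket becomes
\[
\sigma^2\mathbb{E}\bigl[f'(W)-f'(W+\xi^\text{gz})\bigr]+(\sigma^2-\lambda\mathbb{E}\xi^2)\mathbb{E}f'(W+\xi^\text{gz})\,.
\]

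To finish, I invoke the standard Stein-factor bounds for \eqref{eq:GaussStein} with $h\in\mathcal{H}_\text{W}$: scaling the well-known estimates for the standard normal Stein equation to $N(\theta,\sigma^2)$ gives $\lVert f'\rVert\leq\sqrt{2/\pi}/\sigma$ and $\lVert f''\rVert\leq 2/\sigma^2$. Thus the second summand above is bounded by $\sqrt{2/\pi}\sigma^{-1}|\sigma^2-\lambda\mathbb{E}\xi^2|$, matching the middle term of the bound. For the first, the Lipschitz estimate $|f'(W)-f'(W+\xi^\text{gz})|\leq\lVert f''\rVert|\xi^\text{gz}|$ and an evaluation of $\mathbb{E}|\xi^\text{gz}|$ via \eqref{eq:gz_def} with the test function $f(x)=x|x|/2$ (whose derivative is $|x|$) give $\mathbb{E}|\xi^\text{gz}|=\mathbb{E}|\xi|^3/(2\mathbb{E}\xi^2)$, so the first summand is bounded by $\sigma^2\cdot(2/\sigma^2)\cdot\mathbb{E}|\xi|^3/(2\mathbb{E}\xi^2)=\mathbb{E}|\xi|^3/\mathbb{E}\xi^2$, which is the first term of the claim.

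The main obstacle I anticipate is the clean application of the generalised zero-bias identity in the presence of the independent $W$, and the companion computation of $\mathbb{E}|\xi^\text{gz}|$; neither is deep, but both rely on choosing the right test function and conditioning argument. The Stein factor bounds are standard and require only the mild observation that rescaling the variance does not affect the Lipschitz constant of $\tilde h(y)=h(\theta+\sigma y)/\sigma$.
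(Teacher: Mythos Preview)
Your proof is correct and uses the same core ingredients as the paper (the generalised zero-bias $\xi^{\text{gz}}$, the Stein factors $\lVert f'\rVert\le\sqrt{2/\pi}\,\sigma^{-1}$ and $\lVert f''\rVert\le 2\sigma^{-2}$, and the identity $\mathbb{E}|\xi^{\text{gz}}|=\mathbb{E}|\xi|^3/(2\mathbb{E}\xi^2)$), but your organisation is somewhat more direct. The paper first rewrites $\sigma^2\mathbb{E}f'(W)-\mathbb{E}[(W-\theta)f(W)]$ as $\sigma^2[\mathbb{E}f'(W)-\mathbb{E}f'(W^{\text{nz}})]$ via the non-zero-bias transform \eqref{eq:nz_def}, then inserts $\mathbb{E}f'(W+\xi^{\text{gz}})$ and unpacks $\mathbb{E}f'(W+\xi^{\text{gz}})-\mathbb{E}f'(W^{\text{nz}})$ through a chain of algebraic manipulations (their display \eqref{eq:GaussPf4}) before applying Taylor's theorem. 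You instead insert the bridge term $\lambda\mathbb{E}[\xi f(W+\xi)]$ at the outset, cancel the $\theta\mathbb{E}f(W)$ contribution using $\lambda\mathbb{E}\xi=\theta$, and arrive at the decomposition $\sigma^2[\mathbb{E}f'(W)-\mathbb{E}f'(W+\xi^{\text{gz}})]+(\sigma^2-\lambda\mathbb{E}\xi^2)\mathbb{E}f'(W+\xi^{\text{gz}})$ in one step, never invoking $W^{\text{nz}}$. The advantage of your route is brevity; the paper's route makes the conceptual link to the compound-Poisson/zero-bias framework of Section~\ref{sec:intro} more explicit. One minor technical point: the test function $f(x)=x|x|/2$ you use to compute $\mathbb{E}|\xi^{\text{gz}}|$ is not globally Lipschitz, so strictly speaking \eqref{eq:gz_def} applies only after a routine truncation-and-limit argument; the paper simply asserts the value without comment, so you are in good company.
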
 
\begin{proof}
Letting $f$ be as in \eqref{eq:GaussStein} we follow Stein's method for Gaussian approximation (see \cite{chen11}) to write
\begin{equation}\label{eq:GaussPf1}
d_\text{W}(W,Z)\leq\sup_{h\in\mathcal{H}_\text{W}}|\mathbb{E}[\sigma^2f^\prime(W)-(W-\theta)f(W)]|\,,
\end{equation}
noting that this is usually written in the setting with $\theta=0$ and $\sigma^2=1$. The setting we use here, with a general mean and variance, is a straightforward extension that we use for convenience in the work that follows. Following the arguments of the proof of Lemma 2.4 of \cite{chen11}, we have that, for $h\in\mathcal{H}_\text{W}$,
\begin{equation}\label{eq:GaussSteinFactors}
\sup_{x\in\mathbb{R}}|f^\prime(x)|\leq\sqrt{\frac{2}{\pi}}\sigma^{-1}\,,\text{ and }\sup_{x\in\mathbb{R}}|f^{\prime\prime}(x)|\leq2\sigma^{-2}\,.
\end{equation}
Now, using \eqref{eq:nz_def} we write
\begin{align}\label{eq:GaussPf2}
\nonumber\mathbb{E}[\sigma^2f^\prime(W)-(W-\theta)f(W)]
&=\sigma^2\left[\mathbb{E}f^\prime(W)-\mathbb{E}f^\prime(W^\text{nz})\right]\\
&=\sigma^2\left[\mathbb{E}f^\prime(W)-\mathbb{E}f^\prime(W+\xi^\text{gz})+\mathbb{E}f^\prime(W+\xi^\text{gz})-\mathbb{E}f^\prime(W^\text{nz})\right]\,.
\end{align}
From \eqref{eq:GaussSteinFactors} we have that, for $h\in\mathcal{H}_\text{W}$,
\begin{equation}\label{eq:GaussPf3}
\left|\mathbb{E}f^\prime(W)-\mathbb{E}f^\prime(W+\xi^\text{gz})\right|
\leq2\sigma^{-2}\mathbb{E}|\xi^\text{gz}|=\frac{\mathbb{E}|\xi|^3}{\sigma^2\mathbb{E}[\xi^2]}\,,
\end{equation}
where the final equality uses the definition \eqref{eq:gz_def}. Again using the definitions \eqref{eq:nz_def} and \eqref{eq:gz_def}, together with the independence of $W$ and $\xi$, we have
\begin{align}\label{eq:GaussPf4}
\nonumber\mathbb{E}f^\prime(W+\xi^\text{gz})-\mathbb{E}f^\prime(W^\text{nz})
&=\frac{\mathbb{E}[\xi(f(W+\xi)-f(W))]}{\mathbb{E}[\xi^2]}-\frac{\mathbb{E}[(W-\theta)f(W)]}{\sigma^2}\\
\nonumber&=\left(\frac{1}{\mathbb{E}[\xi^2]}-\frac{\lambda}{\sigma^2}\right)\mathbb{E}[\xi f(W+\xi)]-\theta\left(\frac{1}{\lambda\mathbb{E}[\xi^2]}-\frac{1}{\sigma^2}\right)\mathbb{E}f(W)\\
\nonumber&\qquad+\frac{1}{\sigma^2}\left(\lambda\mathbb{E}[\xi f(W+\xi)]-\mathbb{E}[Wf(W)]\right)\\
\nonumber&=\left(\frac{1}{\mathbb{E}[\xi^2]}-\frac{\lambda}{\sigma^2}\right)\left(\mathbb{E}[\xi f(W+\xi)]-\mathbb{E}[\xi f(W)]\right)\\
&\qquad+\frac{1}{\sigma^2}\left(\lambda\mathbb{E}[\xi f(W+\xi)]-\mathbb{E}[Wf(W)]\right)\,.
\end{align}
By Taylor's theorem we write
\[
\left(\frac{1}{\mathbb{E}[\xi^2]}-\frac{\lambda}{\sigma^2}\right)\left(\mathbb{E}[\xi f(W+\xi)]-\mathbb{E}[\xi f(W)]\right)=\left(\frac{1}{\mathbb{E}[\xi^2]}-\frac{\lambda}{\sigma^2}\right)\mathbb{E}[\xi^2f^\prime(\kappa)]
\]
for some $\kappa$ between $W$ and $W+\xi$, so that
\begin{equation}\label{eq:GaussPf5}
\left|\frac{1}{\mathbb{E}[\xi^2]}-\frac{\lambda}{\sigma^2}\right|\left|\mathbb{E}[\xi f(W+\xi)]-\mathbb{E}[\xi f(W)]\right|\leq\sqrt{\frac{2}{\pi}}\sigma^{-1}\left|1-\frac{\lambda\mathbb{E}[\xi^2]}{\sigma^2}\right|
\end{equation}
by \eqref{eq:GaussSteinFactors}. The proof is completed by combining \eqref{eq:GaussPf1} with \eqref{eq:GaussPf2}--\eqref{eq:GaussPf5}.
\end{proof}

\subsection{Gaussian approximation under association or negative 
association}\label{subsec:normal_ass}

We now apply Theorem \ref{thm:Gauss} to the case where $W=Y_1+\cdots+Y_n$ is a sum of non-negative, integer-valued random variables which are either associated or negatively associated. Recall that $Y_1,\ldots,Y_n$ are said to be associated if 
\[
\mathbb{E}[g(Y_i,1\leq i\leq n)h(Y_i,1\leq i\leq n)]\geq\mathbb{E}[g(Y_i,1\leq i\leq n)]\mathbb{E}[h(Y_i,1\leq i\leq n)]
\]  
for all non-decreasing functions $g$ and $h$. Similarly $Y_1,\ldots,Y_n$ are said to be negatively associated if
\[
\mathbb{E}[g(Y_i,i\in\mathcal{I})h(Y_i,i\in\mathcal{J})]\leq\mathbb{E}[g(Y_i,i\in\mathcal{I})]\mathbb{E}[h(Y_i,i\in\mathcal{J})]
\]
for all non-decreasing functions $g$ and $h$, and all disjoint sets $\mathcal{I},\mathcal{J}\subseteq\{1,\ldots,n\}$. These two definitions were introduced by \cite{esary67} and \cite{joagdev83}, respectively, and have since found numerous applications. Our interest is in establishing central limit theorems with explicit error bounds in Wasserstein distance in this setting, along the lines of the univariate results presented by Goldstein and Wiroonsri \cite{goldstein18} and Wiroonsri \cite{wiroonsri18}, but without the boundedness assumptions on the underlying random variables required by these latter results.

We obtain the following result in the negatively associated setting, which we state in terms of the standardised random variables for easier comparison with results of \cite{wiroonsri18}.
\begin{theorem}\label{thm:negass}
Let $Y_1,\ldots,Y_n$ be non-negative, integer-valued random variables which are negatively associated, and let $W=Y_1+\cdots+Y_n$ with $\sigma^2=\text{Var}(W)$. Then
\[
d_\text{W}(\widetilde{W},\widetilde{Z})\leq \frac{\sum_{k=1}^n\mathbb{E}[Y_k^3]}{\sigma\sum_{k=1}^n\mathbb{E}[Y_k^2]}
+\sqrt{\frac{8}{\pi}}\sigma^{-2}\left(-\sum_{i\not=j}\text{Cov}(Y_i,Y_j)+\sum_{k=1}^n\mathbb{E}[Y_k]^2\right)\,,
\]  
where $\widetilde{W}=\sigma^{-1}(W-\mathbb{E}W)$ and $\widetilde{Z}\sim\text{N}(0,1)$.
\end{theorem}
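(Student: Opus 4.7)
First I would apply Theorem~\ref{thm:Gauss} to $W=Y_1+\cdots+Y_n$ with the choice $\lambda=n$ and $\xi$ distributed as the uniform mixture $\frac{1}{n}\sum_{k=1}^n\mathcal{L}(Y_k)$, taken independent of $W$. This satisfies $\mathbb{E}\xi=\theta/\lambda$ automatically, and routine moment calculations yield $\mathbb{E}|\xi|^3/\mathbb{E}[\xi^2]=\sum_k\mathbb{E}[Y_k^3]/\sum_k\mathbb{E}[Y_k^2]$, $\lambda\mathbb{E}[\xi^2]=\sum_k\mathbb{E}[Y_k^2]$, and $\lambda\mathbb{E}[\xi f(W+\xi)]=\sum_k\mathbb{E}[Y_k'f(W+Y_k')]$, where each $Y_k'$ is an independent copy of $Y_k$ taken independent of $W$. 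Combining the variance decomposition $\sigma^2=\sum_k\mathbb{E}[Y_k^2]-\sum_k\mathbb{E}[Y_k]^2+\sum_{i\neq j}\text{Cov}(Y_i,Y_j)$ with the NA hypothesis $\sum_{i\neq j}\text{Cov}(Y_i,Y_j)\leq 0$ gives $|\sigma^2-\lambda\mathbb{E}[\xi^2]|=\sum_k\mathbb{E}[Y_k]^2-\sum_{i\neq j}\text{Cov}(Y_i,Y_j)$, so the first two terms of Theorem~\ref{thm:Gauss} already deliver exactly the first term of the target and one copy of $\sqrt{2/\pi}\,\sigma^{-1}\bigl(-\sum_{i\neq j}\text{Cov}(Y_i,Y_j)+\sum_k\mathbb{E}[Y_k]^2\bigr)$.

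The principal work is to bound the compound Poisson residual
\[
\sup_{h\in\mathcal{H}_{\mathrm W}}\Bigl|\sum_{k=1}^n\bigl(\mathbb{E}[Y_k'f(W+Y_k')]-\mathbb{E}[Y_kf(W)]\bigr)\Bigr|
\]
by the same quantity $\sqrt{2/\pi}\,\sigma^{-1}\bigl(-\sum_{i\neq j}\text{Cov}(Y_i,Y_j)+\sum_k\mathbb{E}[Y_k]^2\bigr)$, so that after standardising via $d_{\mathrm W}(\widetilde W,\widetilde Z)=\sigma^{-1}d_{\mathrm W}(W,Z)$ the second and third terms of Theorem~\ref{thm:Gauss} combine to the prefactor $\sqrt{8/\pi}=2\sqrt{2/\pi}$ of the claim. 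I would decompose each summand as $\mathbb{E}[Y_k'(f(W+Y_k')-f(W))]-\text{Cov}(Y_k,f(W))$, exploiting the independence of $Y_k'$ from $W$ on the first piece via a mean-value / Taylor expansion, and splitting $\text{Cov}(Y_k,f(W))$ through $W=W_k+Y_k$ to isolate a $\text{Cov}(Y_k,f(W_k))$ contribution. The latter is then controlled by Newman's covariance inequality for NA variables, namely $|\text{Cov}(g(X),h(Y))|\leq\|g'\|\|h'\||\text{Cov}(X,Y)|$ applied with $g(y)=y$ and $h=f$, yielding after summing the estimate $\|f'\|\bigl(-\sum_{i\neq j}\text{Cov}(Y_i,Y_j)\bigr)$ using $\|f'\|\leq\sqrt{2/\pi}\,\sigma^{-1}$ from \eqref{eq:GaussSteinFactors}.

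The hard part will be orchestrating the cancellations so that the naive per-$k$ bounds (each of order $\|f'\|\mathbb{E}[Y_k^2]$) collapse to the sharp global bound of order $\|f'\|\bigl(\sum_k\mathbb{E}[Y_k^2]-\sigma^2\bigr)$ rather than the loose $\|f'\|\sum_k\mathbb{E}[Y_k^2]$. The key point is that the leading Taylor term $\sum_k\mathbb{E}[Y_k^2]\mathbb{E}[f'(W)]$ produced by $\mathbb{E}[Y_k'(f(W+Y_k')-f(W))]=\mathbb{E}[(Y_k')^2]\mathbb{E}[f'(W)]+O(\|f''\|\mathbb{E}[Y_k^3])$ must be paired with $\sum_k\text{Cov}(Y_k,f(W))=\text{Cov}(W,f(W))$ in such a way that only the variance shortfall $(\sum_k\mathbb{E}[Y_k^2]-\sigma^2)\mathbb{E}[f'(W)]$ survives, together with the $\sum_k\mathbb{E}[Y_k]^2$ contribution coming from the residual mean-value terms. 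The higher-order Taylor remainder, of magnitude at most $\tfrac12\|f''\|\sum_k\mathbb{E}[Y_k^3]\leq\sigma^{-2}\sum_k\mathbb{E}[Y_k^3]$ using $\|f''\|\leq 2\sigma^{-2}$, is of the same order as the first-term contribution of Theorem~\ref{thm:Gauss} and can be absorbed without degrading the bound. The stated estimate then follows by assembling the three terms of Theorem~\ref{thm:Gauss} and scaling by $\sigma^{-1}$.
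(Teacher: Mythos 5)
Your first half matches the paper exactly: the paper also applies Theorem~\ref{thm:Gauss} with $\mathbb{P}(\xi=j)=\lambda^{-1}\sum_{k=1}^n\mathbb{P}(Y_k=j)$, computes the same moments, uses $\sigma^2-\lambda\mathbb{E}[\xi^2]=\sum_{i\neq j}\text{Cov}(Y_i,Y_j)-\sum_k\mathbb{E}[Y_k]^2$ together with the sign of the covariances under negative association, and finishes by scaling. The genuine gap is in how you treat the residual term $\sup_h\bigl|\lambda\mathbb{E}[\xi f(W+\xi)]-\mathbb{E}[Wf(W)]\bigr|$. The paper does not prove this from scratch: it imports, from the proof of Theorem~1.1 of \cite{daly13}, the bound $\lVert\Delta f\rVert\bigl(\sum_k\mathbb{E}[Y_k^2]-\sigma^2\bigr)$, which contains no third-moment contribution and, combined with \eqref{eq:GaussSteinFactors}, yields exactly the second copy of $\sqrt{2/\pi}\,\sigma^{-1}\bigl(-\sum_{i\neq j}\text{Cov}(Y_i,Y_j)+\sum_k\mathbb{E}[Y_k]^2\bigr)$. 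Your replacement argument is only a sketch, and as sketched it does not deliver the stated inequality.

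Concretely, two problems. First, your second-order Taylor expansion of $\mathbb{E}[Y_k'(f(W+Y_k')-f(W))]$ produces a remainder of size $\tfrac12\lVert f''\rVert\sum_k\mathbb{E}[Y_k^3]\leq\sigma^{-2}\sum_k\mathbb{E}[Y_k^3]$, and you claim it ``can be absorbed without degrading the bound.'' It cannot: after dividing by $\sigma$ this term is $\sigma^{-3}\sum_k\mathbb{E}[Y_k^3]$, whereas the first term of the theorem is $\sum_k\mathbb{E}[Y_k^3]/(\sigma\sum_k\mathbb{E}[Y_k^2])$; since negative association forces $\sigma^2\leq\sum_k\mathbb{E}[Y_k^2]$, your extra term is at least as large as the stated first term, so your route proves only a strictly weaker bound (a larger third-moment coefficient), not the theorem as stated. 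The correct argument avoids any second-order expansion by exploiting the lattice structure and bounding increments of $f$ only through $\lVert\Delta f\rVert\leq\lVert f'\rVert$. Second, after splitting $W=W_k+Y_k$, the covariance inequality for negatively associated variables applies only to functions of disjoint index sets, so it handles $\text{Cov}(Y_k,f(W_k))$-type terms but not the diagonal pieces in which $Y_k$ appears in both arguments (e.g.\ $\mathbb{E}[Y_k(f(W_k+Y_k)-f(W_k+Y_k'))]$); controlling those, and making the claimed cancellation down to $\sum_k\mathbb{E}[Y_k^2]-\sigma^2$ actually happen, is precisely the content of the argument in \cite{daly13} that you have left as ``the hard part.'' As it stands the key estimate is asserted rather than proved, so the proposal needs either to carry out that monotonicity/coupling argument in full or to cite it.
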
  
\begin{proof}
We combine Theorem \ref{thm:Gauss} with compound Poisson approximation results of Daly \cite{daly13}, who showed in the proof of his Theorem 1.1 that if we choose $\mathbb{P}(\xi=j)=\lambda^{-1}\sum_{k=1}^n\mathbb{P}(Y_k=j)$ in our setting then we obtain
\begin{align*}
\sup_{h\in\mathcal{H}_{\text{W}}}\left|\lambda\mathbb{E}[\xi f(W+\xi)]-\mathbb{E}[Wf(W)]\right|&\leq\sup_{h\in\mathcal{H}_{\text{W}}}\lVert\Delta f\rVert\left(\sum_{k=1}^n\mathbb{E}[Y_k^2]-\sigma^2\right)\\
&\leq\sqrt{\frac{2}{\pi}}\sigma^{-1}\left(-\sum_{i\not=j}\text{Cov}(Y_i,Y_j)+\sum_{k=1}^n\mathbb{E}[Y_k]^2\right)\,,
\end{align*}
where the final inequality follows from \eqref{eq:GaussSteinFactors} and we note that the negative association property implies that these covariances are non-positive.

With this choice of $\xi$ we have that $\mathbb{E}\xi=\lambda^{-1}\mathbb{E}[W]$, $\mathbb{E}[\xi^2]=\lambda^{-1}\sum_{k=1}^n\mathbb{E}[Y_k^2]$, and $\mathbb{E}|\xi^3|=\mathbb{E}[\xi^3]=\lambda^{-1}\sum_{k=1}^n\mathbb{E}[Y_k^3]$. We also note that
\[
\sigma^2-\lambda\mathbb{E}[\xi^2]=\sum_{i\not=j}\text{Cov}(Y_i,Y_j)-\sum_{k=1}^n\mathbb{E}[Y_k]^2\,.
\]
Theorem \ref{thm:Gauss} then gives
\[
d_\text{W}(W,Z)\leq \frac{\sum_{k=1}^n\mathbb{E}[Y_k^3]}{\sum_{k=1}^n\mathbb{E}[Y_k^2]}
+\sqrt{\frac{8}{\pi}}\sigma^{-1}\left(-\sum_{i\not=j}\text{Cov}(Y_i,Y_j)+\sum_{k=1}^n\mathbb{E}[Y_k]^2\right)\,,
\]
where $Z\sim\text{N}(\mathbb{E}W,\sigma^2)$. Scaling properties of Wasserstein distance give $d_\text{W}(\widetilde{W},\widetilde{Z})=\sigma^{-1}d_\text{W}(W,Z)$, completing the proof.
\end{proof}
This gives a result comparable to Theorem 1.3 of Wiroonsri \cite{wiroonsri18}, who showed that if $\widetilde{W}=\widetilde{Y}_1+\cdots+\widetilde{Y}_n$ is a sum of mean-zero, negatively associated random variables with $|\widetilde{Y}_i|\leq B$ for each $i$ and with $\text{Var}(\widetilde{W})=1$, then
\begin{equation}\label{eq:Wnegass}
d_\text{W}(\widetilde{W},\widetilde{Z})\leq5B-5.2\sum_{i\not=j}\text{Cov}(\widetilde{Y_i},\widetilde{Y}_j)\,.
\end{equation}
Note that this does not require the $\widetilde{Y}_i$ be non-negative and integer-valued; our Theorem \ref{thm:negass} has this assumption in order to be able to employ the compound Poisson approximation framework from \cite{daly13}. However, our result does remove the boundedness needed by \eqref{eq:Wnegass}. In cases where this boundedness does hold, the term proportional to $\sigma^{-2}\sum_{k=1}^n\mathbb{E}[Y_k]^2$ in the upper bound of Theorem \ref{thm:negass} means that our result may perform significantly worse than \eqref{eq:Wnegass}. For example, if the $Y_k$ are independent Bernoulli random variables with fixed mean $p$ then \eqref{eq:Wnegass} gives an upper bound in the approximation of their (standardised) sum of order $O(n^{-1/2})$, while $\sigma^{-2}\sum_{k=1}^n\mathbb{E}[Y_k]^2=p/(1-p)$. This latter term is better behaved in other situations. For example, if the $Y_k$ are independent Bernoulli random variables, each with mean $\lambda\log(n)/n$ for some $\lambda>0$ then this term is of order $O(\log(n)/n)$. In this example, both our Theorem \ref{thm:negass} and \eqref{eq:Wnegass} give upper bounds of order $O(1/\log(n))$, with a slightly better constant in the leading term in Theorem \ref{thm:negass}. We will give further illustrations of our Theorem \ref{thm:negass} in Sections \ref{subsec:normal_sample} and \ref{subsec:overflow} below, in the settings of simple random sampling and urn models with overflow, respectively, in which our result may significantly outperform \eqref{eq:Wnegass}.   

Our Theorem \ref{thm:negass} makes use of a known compound Poisson approximation result for sums of negatively associated random variables to derive a Gaussian approximation result. It would, of course, be simpler to derive such a bound directly with the triangle inequality in conjunction with the same compound Poisson approximation bound and a bound on the proximity of the compound Poisson distribution to Gaussian. However, this would lead to terms involving bounds on the solution to the compound Poisson equation \eqref{eq:CPsteinop} which, as discussed in Section \ref{sec:cp}, have a poor dependence on parameters of the problem in general. The approach we use in our Theorem \ref{thm:negass} lets us bypass this difficulty by using the solution to the Gaussian equation \eqref{eq:GaussStein} instead of having to deal with the solution to \eqref{eq:CPsteinop}.

We obtain an analogous result in the case of a sum of associated random variables. 
\begin{theorem}\label{thm:ass}
Let $Y_1,\ldots,Y_n$ be non-negative, integer-valued random variables which are associated, and let $W=Y_1+\cdots+Y_n$ with $\sigma^2=\text{Var}(W)$. Then
\[
d_\text{W}(\widetilde{W},\widetilde{Z})\leq\frac{\sum_{k=1}^n\mathbb{E}[Y_k^3]}{\sigma\sum_{k=1}^n\mathbb{E}[Y_k^2]}
+\sqrt{\frac{8}{\pi}}\sigma^{-2}\left(\sum_{i\not=j}\text{Cov}(Y_i,Y_j)+\sum_{k=1}^n\mathbb{E}[Y_k]^2\right)\,,
\]  
where $\widetilde{W}=\sigma^{-1}(W-\mathbb{E}W)$ and $\widetilde{Z}\sim\text{N}(0,1)$.
\end{theorem}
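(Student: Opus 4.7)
The plan is to mirror the proof of Theorem \ref{thm:negass} almost verbatim, exchanging the sign conventions at the points where negative association was invoked. Specifically, I would apply Theorem \ref{thm:Gauss} with exactly the same choice of auxiliary random variable, namely $\xi$ with mass function $\mathbb{P}(\xi=j)=\lambda^{-1}\sum_{k=1}^n\mathbb{P}(Y_k=j)$ for $j\in\mathbb{Z}^+$, where $\lambda$ is chosen so that this is a valid probability mass function and $\mathbb{E}\xi=\theta/\lambda$ with $\theta=\mathbb{E}W$. With this choice we still have $\lambda\mathbb{E}[\xi^2]=\sum_{k=1}^n\mathbb{E}[Y_k^2]$ and $\lambda\mathbb{E}[\xi^3]=\sum_{k=1}^n\mathbb{E}[Y_k^3]$, and the identity
\[
\sigma^2-\lambda\mathbb{E}[\xi^2]=\sum_{i\not=j}\text{Cov}(Y_i,Y_j)-\sum_{k=1}^n\mathbb{E}[Y_k]^2
\]
continues to hold.

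The key step is replacing the compound Poisson approximation bound from \cite{daly13} that was used in the negatively associated case by its counterpart for (positively) associated summands, which is also established in \cite{daly13}. That bound yields, with the same choice of $\xi$,
\[
\sup_{h\in\mathcal{H}_{\text{W}}}\left|\lambda\mathbb{E}[\xi f(W+\xi)]-\mathbb{E}[Wf(W)]\right|\leq\sup_{h\in\mathcal{H}_\text{W}}\lVert\Delta f\rVert\left(\sum_{i\not=j}\text{Cov}(Y_i,Y_j)+\sum_{k=1}^n\mathbb{E}[Y_k]^2\right)\,,
\]
where now the association property makes $\sum_{i\not=j}\text{Cov}(Y_i,Y_j)\ge 0$. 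Combined with the Stein factor bound $\lVert\Delta f\rVert\leq\sqrt{2/\pi}\,\sigma^{-1}$ from \eqref{eq:GaussSteinFactors}, this controls the third term in Theorem \ref{thm:Gauss}.

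For the middle term of Theorem \ref{thm:Gauss}, I would use the triangle inequality to write
\[
|\sigma^2-\lambda\mathbb{E}[\xi^2]|\leq\sum_{i\not=j}\text{Cov}(Y_i,Y_j)+\sum_{k=1}^n\mathbb{E}[Y_k]^2\,,
\]
which is valid precisely because both summands on the right are non-negative in the associated case; this is the one place where the argument genuinely diverges from the negatively associated case, where the identity held without absolute values. The first term of Theorem \ref{thm:Gauss} reduces to $\sum_k\mathbb{E}[Y_k^3]/\sum_k\mathbb{E}[Y_k^2]$ exactly as before since $\xi\geq 0$.

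Finally, plugging these three bounds into Theorem \ref{thm:Gauss} gives a bound on $d_\text{W}(W,Z)$ with $Z\sim\text{N}(\mathbb{E}W,\sigma^2)$, and the scaling identity $d_\text{W}(\widetilde{W},\widetilde{Z})=\sigma^{-1}d_\text{W}(W,Z)$ completes the proof. The main conceptual obstacle, which turns out to be quite mild, is that the associated version of the compound Poisson approximation bound needs to provide an absolute-value-type estimate because, unlike in the negatively associated case, neither of $\sigma^2$ and $\sum_k\mathbb{E}[Y_k^2]$ dominates the other in general; this is handled cleanly by the triangle inequality step above.
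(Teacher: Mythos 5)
Your proposal is correct and follows essentially the same route as the paper: the same choice of $\xi$, the bound from the associated case of \cite{daly13} (which the paper states as $\sigma^2-\sum_k\mathbb{E}[Y_k^2]+2\sum_k\mathbb{E}[Y_k]^2$, algebraically identical to your covariance form), the triangle inequality for the term $|\sigma^2-\lambda\mathbb{E}[\xi^2]|$, and the scaling step at the end. No substantive differences.
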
 
\begin{proof}
We proceed similarly to the proof of Theorem \ref{thm:negass}, making the same choice of $\xi$, with $\mathbb{P}(\xi=j)=\lambda^{-1}\sum_{k=1}^n\mathbb{P}(Y_k=j)$. In this setting, the proof of Theorem 1.2 of Daly \cite{daly13} gives
\begin{align*}
\sup_{h\in\mathcal{H}_{\text{W}}}\left|\lambda\mathbb{E}[\xi f(W+\xi)]-\mathbb{E}[Wf(W)]\right|&\leq\sup_{h\in\mathcal{H}_{\text{W}}}\lVert\Delta f\rVert\left(\sigma^2-\sum_{k=1}^n\mathbb{E}[Y_k^2]+2\sum_{k=1}^n\mathbb{E}[Y_k]^2\right)\\
&\leq\sqrt{\frac{2}{\pi}}\sigma^{-1}\left(\sum_{i\not=j}\text{Cov}(Y_i,Y_j)+\sum_{k=1}^n\mathbb{E}[Y_k]^2\right)\,,
\end{align*}
again by \eqref{eq:GaussSteinFactors}, and noting that the covariances are non-negative in this case. Proceeding as in the proof of Theorem \ref{thm:negass}, Theorem \ref{thm:Gauss} then gives us
\[
d_\text{W}(W,Z)\leq\frac{\sum_{k=1}^n\mathbb{E}[Y_k^3]}{\sum_{k=1}^n\mathbb{E}[Y_k^2]}
+\sqrt{\frac{8}{\pi}}\sigma^{-1}\left(\sum_{i\not=j}\text{Cov}(Y_i,Y_j)+\sum_{k=1}^n\mathbb{E}[Y_k]^2\right)\,,
\]
and the conclusion follows.
\end{proof}
In their Theorem 1.3, Goldstein and Wiroonsri \cite{goldstein18} show that if $\widetilde{W}=\widetilde{Y}_1+\cdots+\widetilde{Y}_n$ is a sum of mean-zero, associated random variables with $|\widetilde{Y}_i|\leq B$ for each $i$ and with $\text{Var}(\widetilde{W})=1$, then
\begin{equation*}
d_\text{W}(\widetilde{W},\widetilde{Z})\leq5B+\sqrt{\frac{8}{\pi}}\sum_{i\not=j}\text{Cov}(\widetilde{Y_i},\widetilde{Y}_j)\,.
\end{equation*}
Similar remarks apply as in the negatively associated case when comparing this result to our Theorem \ref{thm:ass}. Goldstein and Wiroonsri do not need that the underlying random variables are integer-valued, but do require a boundedness condition not needed in our Theorem \ref{thm:ass}.

\subsection{Application to simple random sampling}\label{subsec:normal_sample}

Given (not necessarily distinct) non-negative integers $c_1,\ldots,c_m$, we take a sample of size $n<m$ without replacement. Let $Y_1,\ldots,Y_n$ denote the sampled values, and $W=Y_1+\cdots+Y_n$ their sum. It is known that $Y_1\ldots,Y_n$ are negatively associated; see Section 3.2 of \cite{joagdev83}. It is easily shown that $\mathbb{E}[Y_k^\alpha]=m^{-1}\sum_{j=1}^mc_j^\alpha$ for each $k$ and
\[
\sum_{i\not=j}\text{Cov}(Y_i,Y_j)=\frac{n(n-1)}{m(m-1)}\sum_{i=1}^m\sum_{j\not=i}c_ic_j-\frac{n(n-1)}{m^2}\left(\sum_{j=1}^mc_j\right)^2\,,
\]
so that $\theta=\mathbb{E}W=\frac{n}{m}\sum_{j=1}^mc_j$ and
\[
\sigma^2=\text{Var}(W)=\frac{n}{m}\sum_{j=1}^mc_j^2+\frac{n(n-1)}{m(m-1)}\sum_{i=1}^m\sum_{j\not=i}c_ic_j-\frac{n^2}{m^2}\left(\sum_{j=1}^mc_j\right)^2\,.
\]
Our Theorem \ref{thm:negass} then gives the following Gaussian approximation bound.
\begin{proposition}
With $W$ as above,
\[
d_\text{W}(\widetilde{W},\widetilde{Z})\leq\frac{\sum_{j=1}^mc_j^3}{\sigma\sum_{j=1}^mc_j^2}+\sqrt{\frac{8}{\pi}}\frac{n}{m}\sigma^{-2}\left(\frac{n}{m}\left(\sum_{j=1}^mc_j\right)^2-\frac{n-1}{m-1}\sum_{i=1}^m\sum_{j\not=i}c_ic_j\right)\,,
\]
where $\widetilde{W}=\sigma^{-1}(W-\theta)$ and $\widetilde{Z}\sim\text{N}(0,1)$.
\end{proposition}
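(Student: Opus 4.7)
The plan is to apply Theorem \ref{thm:negass} directly, since the sampled values $Y_1,\ldots,Y_n$ are negatively associated by the result of \cite{joagdev83} already cited in the preamble to the proposition, and all that remains is to substitute the moment and covariance expressions computed in the paragraph just above the statement.

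First I would record the marginal distributions. Since each $Y_k$ is uniformly distributed on $\{c_1,\ldots,c_m\}$, one has $\mathbb{E}[Y_k^\alpha] = m^{-1}\sum_{j=1}^m c_j^\alpha$ for every $\alpha \geq 1$ and every $k$, so in particular
\[
\sum_{k=1}^n \mathbb{E}[Y_k^3] = \frac{n}{m}\sum_{j=1}^m c_j^3, \qquad \sum_{k=1}^n \mathbb{E}[Y_k^2] = \frac{n}{m}\sum_{j=1}^m c_j^2, \qquad \sum_{k=1}^n \mathbb{E}[Y_k]^2 = \frac{n}{m^2}\Bigl(\sum_{j=1}^m c_j\Bigr)^2.
\]
The common factor $n/m$ cancels in the ratio $\sum_k \mathbb{E}[Y_k^3]/\bigl(\sigma\sum_k \mathbb{E}[Y_k^2]\bigr)$ to give the first term of the asserted bound.

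For the second term I would combine the explicit formula for $\sum_{i\neq j}\mathrm{Cov}(Y_i,Y_j)$ already quoted in the preamble with $\sum_{k=1}^n \mathbb{E}[Y_k]^2$ above, obtaining
\[
-\sum_{i\neq j}\mathrm{Cov}(Y_i,Y_j) + \sum_{k=1}^n \mathbb{E}[Y_k]^2 = \frac{n(n-1) + n}{m^2}\Bigl(\sum_{j=1}^m c_j\Bigr)^2 - \frac{n(n-1)}{m(m-1)}\sum_{i=1}^m\sum_{j\neq i} c_i c_j,
\]
and the leading coefficient $n(n-1)+n = n^2$ makes the right-hand side factor as $\tfrac{n}{m}\bigl(\tfrac{n}{m}(\sum c_j)^2 - \tfrac{n-1}{m-1}\sum_i\sum_{j\neq i}c_ic_j\bigr)$, matching the second term of the proposition exactly.

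There is no real obstacle here, only bookkeeping: the proposition is a direct specialisation of Theorem \ref{thm:negass} once the negative association property of simple random sampling is invoked. Plugging the two displayed computations into the bound of Theorem \ref{thm:negass} and tidying up completes the proof.
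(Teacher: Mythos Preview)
Your proposal is correct and is exactly the approach the paper takes: the proposition is stated immediately after the moment and covariance formulas with the sentence ``Our Theorem~\ref{thm:negass} then gives the following Gaussian approximation bound,'' and no further proof is given. Your bookkeeping matches the intended substitution precisely.
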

Note that if we were to instead apply \eqref{eq:Wnegass} in this setting we would take
\[
B=\frac{\max_i\{c_i\}-\frac{1}{m}\sum_{j=1}^mc_j}{\sigma}\,,
\]
which would give a rather poor upper bound in the case where some of the $c_j$ are much larger than a typical element of this sequence. Finally, we note that Goldstein and Reinert \cite{goldstein97} also derive a Gaussian approximation bound in this setting in their Theorem 4.1, but consider only a set of test functions with four bounded derivatives, so do not obtain a bound in Wasserstein distance. 

\subsection{Application to urn models with overflow}\label{subsec:overflow}

Suppose $n$ balls are each assigned independently to one of $m$ urns, with each ball assigned to urn $j$ with probability $p_j$ for $j=1,\ldots,m$, where $p_1+\cdots+p_m=1$. Let $S_j$ denote the number of balls assigned to urn $j$. Several statistics of interest in this setting may be written in the form $W=\sum_{j=1}^my(S_j)$ for some function $y:\mathbb{R}\to\mathbb{R}$. Examples include those considered by Boutsikas and Koutras \cite{boutsikas02b}, $y(x)=\mathbf{1}(x\geq k)$ and $y(x)=(x-k+1)\mathbf{1}(x\geq k)$. These are motivated by a setting in which each urn has capacity for $k-1$ balls, and then $W$ counts the number of urns exceeding this capacity and the number of excess balls assigned to these overflowing urns, respectively. In each of these cases $y$ is an increasing function, and $y(S_1),\ldots,y(S_m)$ are identically distributed. Since $S_1,\ldots,S_m$ are negatively associated, then when $y$ is increasing we also have that $y(S_1),\ldots,y(S_m)$ are negatively associated. We may thus apply our Theorem \ref{thm:negass} to this setting.

In the case where $p_j=p=1/m$ for each $j=1,\ldots,m$, and for each of the two cases of $y$ above, Boutsikas and Koutras \cite{boutsikas02b} establish a Poisson or compound Poisson limit for $W$ and derive expressions for the mass function of $y(S_1)$ and the covariance $\text{Cov}(y(S_1),y(S_2))$, thus providing the ingredients we need to immediately apply our Theorem \ref{thm:negass} to also yield a Gaussian approximation result. 

For example, consider the case where we count the total number of balls assigned to urns that have already reached their capacity, $W=\sum_{j=1}^mY_j$, where $Y_j=(S_j-k+1)\mathbf{1}(S_j\geq k)$. In this setting we have 
\[
\mathbb{P}(Y_j=l)=\binom{n}{l+k-1}p^{l+k-1}(1-p)^{n-l-k+1}\,,
\]
for $l=1,\ldots,n-k+1$, and
\begin{multline*}
\text{Cov}(Y_1,Y_2)=\sum_{i=k}^{n-k}\sum_{j=k}^{n-i}(i-k+1)(j-k+1)\binom{n}{i,j}p^{i+j}(1-2p)^{n-i-j}\\
-\left(\sum_{i=k}^n(i-k+1)\binom{n}{i}p^i(1-p)^{n-i}\right)^2\,,
\end{multline*}
where $\binom{n}{i,j}=\frac{n!}{i!j!(n-i-j)!}$ if $i+j\leq n$, and is zero otherwise; see page 278 of \cite{boutsikas02b}. Our Theorem \ref{thm:negass} gives the following.
\begin{proposition}
With $W$ as above,
\[
d_\text{W}(\widetilde{W},\widetilde{Z})\leq\frac{\mathbb{E}[Y_1^3]}{\sigma\mathbb{E}[Y_1^2]}+\sqrt{\frac{8}{\pi}}m\sigma^{-2}\left((1-m)\text{Cov}(Y_1,Y_2)+\mathbb{E}[Y_1]^2\right)\,,
\]
where $\sigma^2=\text{Var}(W)$. 
\end{proposition}
It is clear that using \eqref{eq:Wnegass} would  give a rather poor bound here, since the $Y_j$ are bounded only by the rather large value $n-k+1$, values close to which are seen only with vanishingly small probability; see also Table 6 of \cite{boutsikas02b}, where the distribution function of $W$ is evaluated numerically in several examples. Finally, we also refer the interested reader to Section 4.2 of \cite{chen10}, in which the authors establish a more general Gaussian approximation bound in Kolmogorov distance for statistics of the type we consider here, albeit with rather large constants in the error bound.

\section{Gaussian approximation with a vanishing third moment}\label{sec:ThirdMom}

In this section we move away from our previous compound Poisson setting, and consider a bound in approximation by a standard Gaussian distribution under the assumption that the random variable $W$ being approximated is infinitely divisible and has vanishing third moment. As in Section \ref{sec:pois}, this is a more direct application of biasing with an independent increment: we use the decomposition of the zero-biased version, $W^\text{z}$, as $W+Y$ for some independent random variable $Y$ to give a convex ordering between $W$ and $W^\text{z}$, and hence a bound on $\mathbb{E}h(W)$ for suitable functions $h$. Our main result is the following.
\begin{theorem}\label{thm:ThirdMom}
Let $W$ be an infinitely divisible random variable with $\mathbb{E}[W]=\mathbb{E}[W^3]=0$, $\mathbb{E}[W^2]=1$, and with finite fourth moment. Let $h:\mathbb{R}\to\mathbb{R}$ be twice differentiable with absolutely continuous first derivative. Then
\[
|\mathbb{E}h(W)-\mathbb{E}h(Z)|\leq\frac{1}{3}\lVert h^{\prime\prime}\rVert\left(\mathbb{E}[W^4]-3\right)\,,
\]
where $Z\sim\text{N}(0,1)$.
\end{theorem}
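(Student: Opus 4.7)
The plan is to combine the standard Gaussian Stein equation with the representation $W^\text{z} \stackrel{d}{=} W + Y'$ (with $Y' \perp W$) that infinite divisibility of $W$ provides, and then to run a second-order Taylor expansion in $Y'$ whose linear term vanishes because $\mathbb{E}Y' = 0$.

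First I would extract the moments of $Y'$ from the zero-bias identity: substituting $f(x) = x^2/2$ and $f(x) = x^3/3$ in \eqref{eq:zb_def} gives $\mathbb{E}W^\text{z} = \mathbb{E}W^3/(2\mathbb{E}W^2) = 0$ and $\mathbb{E}(W^\text{z})^2 = \mathbb{E}W^4/(3\mathbb{E}W^2) = \mathbb{E}W^4/3$. Since $Y' \perp W$ and $W$ has mean zero and unit variance, this yields $\mathbb{E}Y' = 0$ and $\mathbb{E}(Y')^2 = (\mathbb{E}W^4 - 3)/3$.

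Next, letting $f$ solve $f'(x) - xf(x) = h(x) - \mathbb{E}h(Z)$, the identity \eqref{eq:zb_def} together with $\mathbb{E}W^2 = 1$ gives $\mathbb{E}[Wf(W)] = \mathbb{E}f'(W^\text{z})$, hence
\[
\mathbb{E}h(W) - \mathbb{E}h(Z) = \mathbb{E}\bigl[f'(W) - f'(W + Y')\bigr].
\]
Applying Taylor's theorem (with integral remainder) to $f'$ around $W$, and using the independence of $Y'$ and $W$ together with $\mathbb{E}Y' = 0$ to eliminate the first-order term $\mathbb{E}[Y' f''(W)]$, I obtain
\[
\bigl|\mathbb{E}h(W) - \mathbb{E}h(Z)\bigr| \leq \tfrac{1}{2}\lVert f'''\rVert\, \mathbb{E}(Y')^2 = \frac{\lVert f'''\rVert}{6}\bigl(\mathbb{E}W^4 - 3\bigr).
\]
This is precisely the point at which the convex ordering $W \leq_\text{cx} W^\text{z}$ of Section~\ref{sec:intro} is being invoked: the mean-zero independent-increment representation is what lets the remainder pick up $\mathbb{E}(Y')^2$ rather than $\mathbb{E}|Y'|$.

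The main technical obstacle is the third-order Stein factor bound $\lVert f'''\rVert \leq 2\lVert h''\rVert$, which upon substitution yields the stated $\tfrac{1}{3}\lVert h''\rVert(\mathbb{E}W^4 - 3)$. Since the hypothesis provides only that $h'$ is absolutely continuous, $h'''$ need not exist and one cannot apply the Ornstein--Uhlenbeck semigroup formula $f^{(k)} = -\int_0^\infty e^{-kt}P_t h^{(k)}\,dt$ with $k = 3$ directly. I would instead differentiate the $k = 2$ version of the formula once more under the integral (permissible for $t > 0$ because the OU kernel is smoothing) and then use a single Gaussian integration by parts inside the expectation defining $P_t h''$, trading the missing derivative of $h''$ for a factor of $Z$ in the integrand and leaving a convergent $t$-integral in front. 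This produces the needed Stein factor bound and completes the proof.
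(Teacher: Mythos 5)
Your proof is correct, but it reaches the conclusion by a genuinely different final step than the paper. The paper, after writing $\mathbb{E}h(W)-\mathbb{E}h(Z)=\mathbb{E}[f^\prime(W)-f^\prime(W^\text{z})]$, integrates by parts twice to get a double integral of $f^{(3)}$ against $\int_{-\infty}^x[F(y)-G(y)]\,\text{d}y$, invokes the convex ordering of $W$ and $W^\text{z}$ (via Theorems 3.A.34 and 3.A.1 of Shaked--Shanthikumar) to fix the sign of the inner integral, and then evaluates the resulting double integral as $\tfrac{1}{2}(\text{Var}(W^\text{z})-\text{Var}(W))$ using Boutsikas--Vaggelatou; you instead perform a direct second-order Taylor expansion of $f^\prime$ about $W$, kill the linear term using independence and $\mathbb{E}Y^\prime=0$, and bound the remainder by $\tfrac{1}{2}\lVert f^{(3)}\rVert\,\mathbb{E}[(Y^\prime)^2]$ with $\mathbb{E}[(Y^\prime)^2]=\tfrac{1}{3}(\mathbb{E}[W^4]-3)$ computed from the zero-bias identity. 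Both arguments rest on exactly the same two pillars -- the independent-increment representation $W^\text{z}\stackrel{d}{=}W+Y^\prime$ supplied by infinite divisibility (with $\mathbb{E}Y^\prime=0$ from the vanishing third moment) and the Stein factor $\lVert f^{(3)}\rVert\leq2\lVert h^{\prime\prime}\rVert$ -- and they are quantitatively equivalent, since the Boutsikas--Vaggelatou identity is essentially the second-order Taylor remainder in disguise. Your route is more elementary and self-contained (no stochastic-ordering citations); the paper's route makes the ordering $W\leq_{\text{cx}}W^\text{z}$, and hence the nonnegativity of $\mathbb{E}[W^4]-3$, explicit.

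Two small remarks. First, your aside that the convex ordering ``is being invoked'' at the Taylor step is not quite accurate: your argument bypasses the ordering entirely, using only $\mathbb{E}Y^\prime=0$ and independence; this is harmless but worth phrasing correctly. You should also record the routine integrability facts that make the expansion legitimate: $f^{(3)}$ bounded gives $f^{\prime\prime}$ Lipschitz, hence of at most linear growth, so $\mathbb{E}|Y^\prime f^{\prime\prime}(W)|<\infty$ and the cross term genuinely factors and vanishes, and the zero-bias identity applies to this $f$ because $W$ has finite fourth moment. Second, for the Stein factor itself the paper simply cites Theorem 1.1 of Daly (2008); your proposed re-derivation by differentiating the Ornstein--Uhlenbeck representation once more and using a Gaussian integration by parts is workable (and in fact yields a constant no worse than $2$), but citing the known bound, as the paper does, would suffice.
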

\begin{proof}
We will again use Stein's method for Gaussian approximation, and to that end let $f:\mathbb{R}\to\mathbb{R}$ solve \eqref{eq:GaussStein} with $\theta=0$, $\sigma^2=1$, and $h$ as in the statement of the theorem. We will make use of the following bound on the third derivative of this $f$, given by Theorem 1.1 of \cite{daly08}:
\begin{equation}\label{eq:GaussSteinFactorII}
\lVert f^{(3)}\rVert\leq2\lVert h^{\prime\prime}\rVert\,.
\end{equation}
For the remainder of the proof we write $F$ for the distribution function of $W$, and $G$ for that of $W^\text{z}$. We now use the Stein equation \eqref{eq:GaussStein}, the definition \eqref{eq:zb_def} and integrate by parts twice to write
\begin{align*}
\mathbb{E}h(W)-\mathbb{E}h(Z)&=\mathbb{E}[f^\prime(W)-Wf(W)]
=\mathbb{E}[f^\prime(W)-f^\prime(W^\text{z})]\\
&=\int_{-\infty}^\infty f^{\prime\prime}(x)\left[G(x)-F(x)\right]\,\text{d}x
=\int_{-\infty}^\infty f^{(3)}(x)\int_{-\infty}^x\left[F(y)-G(y)\right]\,\text{d}y\,\text{d}x\,,
\end{align*}
where in the final equality we use the definition \eqref{eq:zb_def} to note that since $\mathbb{E}[W^\text{z}]=\frac{1}{2}\mathbb{E}[W^3]$ we have $\mathbb{E}[W^\text{z}]=\mathbb{E}[W]=0$.

Now, the infinite divisibility of $W$ gives us that $W^\text{z}$ is equal in distribution to $W+Y$ for some random variable $Y$ independent of $W$. The vanishing third moment of $W$ means that $\mathbb{E}[W^\text{z}]=0$ and hence $\mathbb{E}[Y]=0$. From this, Theorem 3.A.34 of \cite{shaked07} gives us that $W$ precedes $W^\text{z}$ in the usual convex order, and Theorem 3.A.1 of \cite{shaked07} then implies that $\int_{-\infty}^xF(y)\,\text{d}y\leq\int_{-\infty}^x G(y)\,\text{d}y$ for each $x\in\mathbb{R}$. Hence,
\begin{align*}
|\mathbb{E}h(W)-\mathbb{E}h(Z)|&\leq\lVert f^{(3)}\rVert\int_{-\infty}^\infty\left|\int_{-\infty}^x[F(y)-G(y)]\,\text{d}y\right|\,\text{d}x\\
&=\lVert f^{(3)}\rVert\int_{-\infty}^\infty\int_{-\infty}^x[G(y)-F(y)]\,\text{d}y\,\text{d}x\,.
\end{align*}
Combining Theorem 2 of Boutsikas and Vaggelatou \cite{boutsikas02} with their equation (6), we thus have that 
\[
|\mathbb{E}h(W)-\mathbb{E}h(Z)|\leq\frac{1}{2}\lVert f^{(3)}\rVert\left(\text{Var}(W^\text{z})-\text{Var}(W)\right)\,.
\]
The proof is complete on using the bound \eqref{eq:GaussSteinFactorII} and noting that the definition \eqref{eq:zb_def} gives $\text{Var}(W^\text{z})=\frac{1}{3}\mathbb{E}[W^4]$.
\end{proof}
Under the assumption that $W$ is infinitely divisible and has vanishing third moment, Theorem \ref{thm:ThirdMom} gives us an upper bound that depends on the difference between the fourth moment of $W$ and that of a standard Gaussian. This is superior to other Gaussian approximation bounds available in the case of a vanishing third moment, such as that given in Corollary 3.3 of \cite{gaunt16}, where the upper bound is a sum of positive contributions from second- and fourth-moment terms.

We note also that the constant in the upper bound of Theorem \ref{thm:ThirdMom} could be improved at the cost of requiring stronger differentiability conditions on $h$. In the proof of our result we could replace the upper bound in \eqref{eq:GaussSteinFactorII} by either $\frac{1}{\sqrt{2}\Gamma(5/2)}\lVert h^{(3)}\rVert$ or $\frac{1}{4}\lVert h^{(4)}\rVert$ under the assumption that the higher-order derivative exists; see \cite{gaunt25} for a discussion.

\subsection{Example: Gaussian approximation for Student's \emph{t} distribution}

We conclude this section with a brief illustrative example. Let $T\sim\text{t}(m)$ have Student's \emph{t} distribution with $m>4$ degrees of freedom, so that the fourth moment of $T$ is finite. That is, $T$ has density proportional to $(1+\frac{x^2}{m})^{-\frac{1}{2}(m+1)}$ for $x\in\mathbb{R}$, with $\mathbb{E}[T]=\mathbb{E}[T^3]=0$, $\mathbb{E}[T^2]=\frac{m}{m-2}$ and $\mathbb{E}[T^4]=\frac{3m^2}{(m-2)(m-4)}$. We further know that $T$ is infinitely divisible; see Remark 8.12 of \cite{sato99}. Normalising to have unit variance, we choose $W=\sqrt{\frac{m-2}{m}}T$ so that
\[
\mathbb{E}[W^4]=\left(\frac{m-2}{m}\right)^2\frac{3m^2}{(m-2)(m-4)}=\frac{3(m-2)}{m-4}\,.
\]
An application of Theorem \ref{thm:ThirdMom} then gives the following bound.
\begin{proposition}
With $W$ as above and $h:\mathbb{R}\to\mathbb{R}$ as in Theorem \ref{thm:ThirdMom},
\[
|\mathbb{E}h(W)-\mathbb{E}h(Z)|\leq\frac{2}{m-4}\lVert h^{\prime\prime}\rVert\,,
\]
where $Z\sim\text{N}(0,1)$.
\end{proposition}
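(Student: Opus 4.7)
The plan is to verify that $W = \sqrt{\tfrac{m-2}{m}}\,T$ satisfies the hypotheses of Theorem \ref{thm:ThirdMom}, and then to plug the computed fourth moment into that theorem's bound.

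First I would check the moment conditions. Symmetry of the $t$-density about zero gives $\mathbb{E}[T]=\mathbb{E}[T^3]=0$, so the same holds for $W$; the normalisation $W=\sqrt{(m-2)/m}\,T$ yields $\mathbb{E}[W^2]=1$; and the finiteness of $\mathbb{E}[W^4]$ follows from $m>4$, with the value $\tfrac{3(m-2)}{m-4}$ recorded in the preceding paragraph. Next I would confirm infinite divisibility of $W$: as noted in the text, $T\sim t(m)$ is infinitely divisible by Remark 8.12 of \cite{sato99}, and any deterministic rescaling of an infinitely divisible random variable is again infinitely divisible (if $T\stackrel{d}{=}T_1^{(n)}+\cdots+T_n^{(n)}$ with i.i.d.\ summands, then $\sqrt{(m-2)/m}\,T$ decomposes analogously).

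With the hypotheses verified, Theorem \ref{thm:ThirdMom} applies and gives
\[
|\mathbb{E}h(W)-\mathbb{E}h(Z)|\leq\tfrac{1}{3}\lVert h^{\prime\prime}\rVert\left(\mathbb{E}[W^4]-3\right)\,.
\]
Substituting $\mathbb{E}[W^4]=\tfrac{3(m-2)}{m-4}$ gives $\mathbb{E}[W^4]-3=\tfrac{3(m-2)-3(m-4)}{m-4}=\tfrac{6}{m-4}$, and the factor of $\tfrac{1}{3}$ in the constant turns this into $\tfrac{2}{m-4}\lVert h^{\prime\prime}\rVert$, as required.

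There is no serious obstacle here; the only point requiring care is the invocation of infinite divisibility of the $t$ distribution, which is a nontrivial classical fact but is simply cited from \cite{sato99}. Everything else is routine verification of moments and one line of arithmetic on $\mathbb{E}[W^4]-3$.
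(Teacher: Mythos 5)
Your proposal is correct and follows exactly the paper's route: verify the moment and infinite-divisibility hypotheses for the rescaled $t$ variable $W=\sqrt{(m-2)/m}\,T$ (the paper cites Remark 8.12 of \cite{sato99} and leaves the scaling step implicit), then substitute $\mathbb{E}[W^4]=\tfrac{3(m-2)}{m-4}$ into the bound of Theorem \ref{thm:ThirdMom} to get $\tfrac{1}{3}\cdot\tfrac{6}{m-4}=\tfrac{2}{m-4}$. No issues.
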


\subsubsection*{Acknowledgements}
The author thanks anonymous reviewers and an Editor for their constructive comments and suggestions that improved the exposition in the present work.

\end{document}